\title
{           {\protect\hfill \normalfont \tiny
            \\ \vspace{10pt}}
Hasse principle and weak approximation for multinorm equations
}
\author{Cyril Demarche and Dasheng Wei
}
\date{\today}
\keywords{torsor, multinorm torus, Hasse principle, weak approximation, Brauer-Manin obstruction}
\subjclass[2010]{Primary: 11G35, 14G05}
\DeclareTextFontCommand{\textcyr}{\fontencoding{OT2}
    \fontfamily{wncyr}\fontseries{m}\fontshape{n}\selectfont}
\theoremstyle{plain}
\newtheorem{theorem}{Theorem}
\newtheorem{proposition}[theorem]{Proposition}
\newtheorem{lemma}[theorem]{Lemma}
\newtheorem{corollary}[theorem]{Corollary}
\newtheorem{conditional-result}[theorem]{Conditional Result}
\newtheorem{theorem?}{Theorem(?)} [section]
\newtheorem{proposition?}[theorem]{Proposition(?)}
\newtheorem{lemma?}[theorem]{Lemma(?)}
\newtheorem{corollary?}[theorem]{Corollary(?)}
\newtheorem*{theorem*}{Theorem}
\newtheorem*{proposition*}{Proposition}
\newtheorem*{lemma*}{Lemma}
\newtheorem*{corollary*}{Corollary}
\newtheorem*{question*}{Question}
\newtheorem*{conjecture*}{Conjecture}
\newtheorem*{claim*}{Claim}
\newtheorem*{introtheorem*}{Theorem}
\newtheorem*{introproposition*}{Proposition}
\newtheorem*{introlemma*}{Lemma}
\newtheorem*{introcorollary*}{Corollary}
\theoremstyle{definition}
\newtheorem{example}[theorem]{Example}
\newtheorem*{definition*}{Definition}
\newtheorem*{example*}{Example}
\theoremstyle{remark}
\newtheorem{remark}[theorem]{Remark}
\newtheorem*{remark*}{Remark}
\DeclareSymbolFont{rsfs}{U}{rsfs}{m}{n}
\DeclareSymbolFontAlphabet{\mathcal}{rsfs}
\DeclareTextFontCommand{\textcyr}{\fontencoding{OT2}
    \fontfamily{wncyr}\fontseries{m}\fontshape{n}\selectfont}
\newcommand{\Sh}{\textcyr{Sh}}
\newcommand{\isoto}{\overset{\sim}{\to}}
\newcommand{\ZZ}{{\mathbb{Z}}}
\newcommand{\QQ}{{\mathbb{Q}}}
\newcommand{\Gal}{{\rm Gal}}
\newcommand{\Br}{{\rm Br}}
\newcommand{\Hom}{{\rm Hom}}
\newcommand{\Gm}{{\mathbf{G}_m}}
\newcommand{\Aut}{{\rm Aut}}
\newcommand{\res}{{\rm res}}
\def\G{{\mathbb{G}}}
\newcommand{\R}{{\textup{R}}}
\def\Z{{\ZZ}}
\def\Q{{\QQ}}
\def\D{{\mathbf{D}}}
\begin{document}

\begin{abstract}
In this note, we are interested in local-global principles for multinorm equations $\prod_{i=1}^n N_{L_i /k}(z_i) = a$
where $k$ is a global field, $L_i/k$ are finite separable field extensions and $a \in k^*$.

In particular, we prove a result relating the Hasse principle and weak approximation for this equation to the Hasse principle and weak approximation for some classical norm equation
$N_{F/k}(w) = a$ where $F := \bigcap_{i=1}^n L_i$. It provides a proof of a "weak approximation" analogue of a recent conjecture by Pollio and Rapinchuk about the multinorm principle. We also provide a counterexample to the original conjecture concerning the Hasse principle.
\end{abstract}

\maketitle

\section{Introduction}
Let $k$ be a global field, $\Omega$ be the set of places of $k$ and $n \geq 2$. Let $L_1, \dots, L_n$ be finite separable field extensions of $k$. We fix a separable closure $\overline{k}$ of $k$ that contains all the $L_i$'s. Throughout this text, intersections of fields and composites of fields are taken inside the given separable closure $\overline{k}$.

For any $a \in k^*$, we consider the following equation
$$\prod_{i=1}^n N_{L_i/k}(z_i) = a \, .$$
It defines an affine $k$-variety $X$, which is a principal homogeneous space under the $k$-torus $T$ defined by the following exact sequence of $k$-tori
\begin{equation}
\label{exseqtori}
0 \to T \to \prod_{i=1}^n \R_{L_i/k} \Gm \xrightarrow{\prod_i N_{L_i/k}} \Gm \to 0 \, ,
\end{equation}
where the last map is the product of norm maps.

In this text, we say that a family of (smooth and geometrically integral) $k$-varieties satisfies the Hasse principle (resp. the Hasse principle and weak approximation) when for every variety $Z$ in this family, if $Z(k_v) \neq \emptyset$ for all $v$, then $Z(k) \neq \emptyset$ (resp.  if $Z(k_v) \neq \emptyset$ for all $v$, then $Z(k) \neq \emptyset$ and $Z(k)$ is dense in $ \prod_{v \in \Omega} Z(k_v)$).

It is well-known that for varieties $X$ as above, the obstruction to the Hasse principle is measured by the finite group $\Sh^2(k, \widehat{T})$ and the obstruction to weak approximation by the finite group $\Sh^2_{\omega}(k, \widehat{T}) / \Sh^2(k, \widehat{T})$, via the Brauer-Manin obstruction (see for instance \cite{S}), where $\widehat{T}$ is the module of characters of $T$.

Recall that for any Galois module $M$ over $k$, we have by definition
$$\Sh^i(k, M) := \textup{Ker}\left(H^i(k,M) \to \prod_{v \in \Omega} H^i(k_v, M)\right)$$
and
$$\Sh^i_{\omega}(k, M) := \left\{ \alpha \in H^i(k,M) \textup{ s.t. } \alpha_v = 0 \textup{ in } H^i(k_v, M) \textup{ for almost all places } v \in \Omega \right\} \, .$$

More precisely, assuming that $X$ has points in every completion of $k$, there is an isomorphism of finite groups $\Sh^1(k,T) \isoto \Hom(\Sh^2(k, \widehat{T}), \Q / \Z)$ (global duality for tori) such that the class of $X$ in $\Sh^1(k,T)$ maps to the Brauer-Manin obstruction to the Hasse principle for $X$, so that one says that the Brauer-Manin obstruction to the Hasse principle for X is the only one. Concerning weak approximation, assuming $X(k) \neq \emptyset$, i.e. assuming that $X$ is $k$-isomorphic to $T$, there is a natural exact sequence, due to Voskresenski{\u\i} (see \cite{V}, 6.38):
\begin{equation}
\label{exseqVos}
0 \to \overline{T(k)} \to \prod_{v \in \Omega} T(k_v) \to \Hom(\Sh^2_{\omega}(k, \widehat{T}) / \Sh^2(k, \widehat{T}), \Q / \Z) \to 0 \, ,
\end{equation}
where  $\overline{T(k)}$ denotes the closure of $T(k)$ inside $\prod_{v \in \Omega} T(k_v)$ (for the product topology), and the last map is defined via the Brauer-Manin obstruction (or via Tate-Nakayama local duality for tori): one says that the Brauer-Manin obstruction to weak approximation on $T$ (or on $X$) is the only one.

The Hasse principle for such a variety $X$ was studied by several authors, including H\"urlimann (see \cite{H}, especially Proposition 3.3), Colliot-Th\'el\`ene and Sansuc (unpublished), Platonov and Rapinchuk (see \cite{PlR}, sections 6.3 and 9.3, and in particular Proposition 6.11), Prasad and Rapinchuk (see \cite{PrR}, section 4, especially Proposition 4.2) and Pollio and Rapinchuk (see \cite{PR}, Main Theorem). The local-global principle for $X$ is for instance related to some arithmetic properties of algebraic groups of type $A_n$ (see for instance \cite{PlR} or \cite{PrR}).

The main result of this note (see Theorem \ref{main thm}) compares the defects of Hasse principle and weak approximation for $X$ to the defects of Hasse principle and weak approximation for the $k$-variety $Y$ defined by $N_{F/k}(w) = a$, where $F := \bigcap_{i=1}^n L_i$, under some technical assumptions. More precisely, if $S$ denotes the norm $k$-torus $\R_{F/k}^1 \Gm$, then we prove that under some assumptions, there is a canonical isomorphism
$$\Sh^2_{\omega}(k, \widehat{S}) \xrightarrow{\simeq} \Sh^2_{\omega}(k, \widehat{T}) \, ,$$
which essentially means that both Hasse principle and weak approximation hold on $X$ if and only if they both hold on $Y$. In other words, we can compute the defect of Hasse principle and weak approximation for the multinorm equation related to $(L_1, \dots, L_n)$ via the defect of Hasse principle and weak approximation for the usual norm equation related to the extension $F/k$.

In particular, it solves an analogue for weak approximation of a conjecture by Pollio and Rapinchuk (see \cite{PR}, section 4), which concerns the multinorm Hasse principle and which we recall here:
\begin{conjecture*}[Pollio-Rapinchuk]
Let $L_1$ and $L_2$ be finite Galois extensions of $k$. If every extension $P$ of $k$ contained in $L_1 \cap L_2$ satisfies the norm principle, then the pair $L_1, L_2$ satisfies the multinorm principle (it may be enough to require that only the intersection $L_1 \cap L_2$ satisfies the norm principle).
\end{conjecture*}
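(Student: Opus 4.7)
The plan is to reduce the multinorm principle for the pair $(L_1, L_2)$ to the classical norm principle for the intersection field $F := L_1 \cap L_2$, by comparing Shafarevich groups of the associated character lattices. Recall from the introduction that the obstruction to the Hasse principle for the multinorm variety $X$ is $\Sh^2(k, \widehat{T})$, while the norm principle for $F$ is equivalent to the vanishing of $\Sh^2(k, \widehat{S})$, where $S := \R^1_{F/k}\Gm$ denotes the norm-one torus.

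The first step is to construct a natural morphism $T \to S$. Since $F \subseteq L_i$ for $i = 1, 2$, the norm $N_{L_i/k}$ factors as $N_{F/k} \circ N_{L_i/F}$, so the product $\prod_i N_{L_i/k}$ factors through $\R_{F/k}\Gm$. Restricting to kernels of the norm maps yields the desired morphism $T \to S$, and dually a map of Galois lattices $\widehat{S} \to \widehat{T}$, hence a homomorphism $\Sh^2(k, \widehat{S}) \to \Sh^2(k, \widehat{T})$. Under the hypothesis that $F$ (or, in the stronger version, every $P \subseteq F$) satisfies the norm principle, the source vanishes, and the conjecture reduces to showing that this map is surjective.

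To attack surjectivity, I would study the cone $\widehat{Q}$ of the complex $[\widehat{S} \to \widehat{T}]$ and apply the long exact sequence in Galois cohomology. The module $\widehat{Q}$ should admit a concrete description in terms of the permutation lattices $\ZZ[G/G_i]$ and $\ZZ[G/G_F]$, with $G := \Gal(\kbar/k)$, reducing the problem to showing $\Sh^2(k, \widehat{Q}) = 0$. The stronger hypothesis that \emph{every} intermediate subfield of $F$ satisfies the norm principle should enter through a Mayer--Vietoris-type d\'evissage along the Galois subfield lattice of $F$, producing finer vanishing of local-global obstructions at each stage of the filtration.

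The main obstacle is precisely this last step. The fact that the paper's main theorem only asserts an isomorphism of the weaker groups $\Sh^2_\omega$, rather than of $\Sh^2$, strongly suggests that surjectivity at the level of $\Sh^2$ genuinely fails --- i.e.\ that the stated conjecture admits a counterexample, arising from a class in $H^2(k, \widehat{Q})$ that is locally trivial at all but finitely many places but fails to be so at every place. Accordingly, the strategy outlined above should succeed only after replacing $\Sh^2$ by $\Sh^2_\omega$ throughout, yielding the weak-approximation analogue announced in the abstract; the original Hasse principle version must instead be refuted by a carefully chosen explicit example where the map $\Sh^2(k, \widehat{S}) \to \Sh^2(k, \widehat{T})$ is not surjective.
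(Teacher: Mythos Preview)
Your diagnosis is correct: the conjecture is false, and the paper treats it exactly as you anticipate. Proposition~\ref{prop counterex} exhibits an explicit counterexample over $k=\Q$ with $L_1 = \Q(\sqrt{-1},\sqrt[4]{q})$ and $L_2 = \Q(\sqrt{-1},\sqrt{m\sqrt{q}})$ for suitable $q,m$, where $F = \Q(\sqrt{-1},\sqrt{q})$ has $\Sh^2(\Q,\widehat{S}) = 0$ while $\Sh^2(\Q,\widehat{T}) = \Z/2\Z$; the obstruction arises at the prime $2$, where $L_1 \otimes \Q_2 = L_2 \otimes \Q_2$ is a $\D_4$-extension and the relevant map reduces to a vanishing inflation $H^3(\Z/2\Z \times \Z/2\Z,\Z) \to H^3(\D_4,\Z)$.

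Your sketch for the $\Sh^2_\omega$ version aligns with Theorem~\ref{main thm}: your cone $\widehat{Q}$ is the paper's $\widehat{R}$ with $R = \R_{F/k}(R')$, $R'$ the $F$-torus $\prod_i N_{L_i/F}(z_i)=1$, and the vanishing $\Sh^2_\omega(k,\widehat{R})=0$ comes from the linearly disjoint case (Theorem~\ref{thm trivial intersection}), not from any Mayer--Vietoris d\'evissage. One subtlety you gloss over: knowing that $\Sh^2_\omega(k,\widehat{T})$ lies in the image of $H^2(k,\widehat{S}) \to H^2(k,\widehat{T})$ does not yet say the preimage lies in $\Sh^2_\omega(k,\widehat{S})$. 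The paper closes this gap by base-changing to $F$, where $\Sh^2_\omega(F,\widehat{T_F})=0$ (again via Theorem~\ref{thm trivial intersection}), forcing any such preimage into $H^2(F/k,\widehat{S}) \cong H^3(F/k,\Z) \subset \Sh^2_\omega(k,\widehat{S})$.
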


In addition to the proof of the weak approximation analogue of this conjecture, we also provide in Proposition \ref{prop counterex} a counterexample to this conjecture.

\section{The linearly disjoint case}

Throughout this text, $\Gamma_k$ denotes the absolute Galois group of the field $k$. For a $k$-torus $T$, we denote by $\widehat{T}$ the $\Gamma_k$-module of characters of $T$.

We start by the following case, where both Hasse principle and weak approximation hold:
\begin{theorem}
\label{thm trivial intersection}
Let $L_1, \dots, L_n/k$ be finite separable field extensions. Write $\{1, \dots, n\} = I \cup J$, with $I \cap J = \emptyset$ and $I, J \neq \emptyset$.
Let $L_I$ (resp. $L_J$) be the composite of the fields $L_i$, $i \in I$ (resp. $i \in J$). Define $E_I$ (resp. $E_J$) to be the Galois closure of the extension $L_I / k$ (resp. $L_J / k$). Define $T$ to be the $k$-torus of equation $\prod_{i=1}^n N_{L_i /k}(z_i) = 1$.

If $L_I \cap E_J = k$, then
$$\Sh^2_{\omega}(k, \widehat{T}) = 0 \, .$$

In particular, under these assumptions, for any $a \in k^*$, the $k$-variety defined by $\prod_{i=1}^n N_{L_i/k}(z_i) = a$ satisfies the Hasse principle. Assuming it has a rational point, it also satisfies weak approximation.
\end{theorem}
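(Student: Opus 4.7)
The plan is to exploit the hypothesis $L_I \cap E_J = k$ by base-changing $T$ to $E_J$, where it becomes quasi-trivial, and then descending via Hochschild--Serre together with Chebotarev density.

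First I verify that $T \times_k E_J$ is quasi-trivial. For $j \in J$ the inclusion $L_j \subseteq E_J$ yields $\R_{L_j/k}\Gm \times_k E_J \simeq \Gm^{[L_j : k]}$; and for $i \in I$ the hypothesis $L_I \cap E_J = k$ (applied to $L_i \subseteq L_I$) gives $L_i \otimes_k E_J \simeq L_i E_J$, hence $\R_{L_i/k}\Gm \times_k E_J \simeq \R_{L_i E_J / E_J}\Gm$. Substituting these into the defining norm equation and solving for one of the $\Gm$-coordinates of $\prod_{j \in J} \Gm^{[L_j:k]}$ gives
$$T \times_k E_J \,\simeq\, \prod_{i \in I} \R_{L_i E_J / E_J}\Gm \,\times\, \Gm^{N - 1}, \qquad N := \sum_{j \in J}[L_j : k],$$
a quasi-trivial $E_J$-torus. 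Consequently $\widehat{T}$ is a permutation $\Gamma_{E_J}$-module; in particular $H^1(\Gamma_{E_J}, \widehat{T}) = 0$, and Chebotarev density applied to $\Sh^1_\omega(\cdot, \QQ/\ZZ)$ gives $\Sh^2_\omega(E_J, \widehat{T}) = 0$.

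Given $\alpha \in \Sh^2_\omega(k, \widehat{T})$, restriction sends $\alpha$ into $\Sh^2_\omega(E_J, \widehat{T}) = 0$. The seven-term Hochschild--Serre sequence for $\Gamma_{E_J} \triangleleft \Gamma_k$, combined with $H^1(\Gamma_{E_J}, \widehat{T}) = 0$, yields an isomorphism
$$H^2\bigl(\Gal(E_J/k),\, \widehat{T}^{\Gamma_{E_J}}\bigr) \isoto \ker\bigl(H^2(k, \widehat{T}) \to H^2(E_J, \widehat{T})\bigr),$$
so $\alpha = \mathrm{Inf}(\tilde{\alpha})$ for a unique $\tilde{\alpha}$. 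A direct computation of $\widehat{T}^{\Gamma_{E_J}}$ from the presentation $0 \to \ZZ \to \bigoplus_i \ZZ[\Gamma_k/\Gamma_{L_i}] \to \widehat{T} \to 0$ identifies it, as a $\Gal(E_J/k)$-module, with the character module of the quasi-trivial $k$-torus $T' := \Gm^{|I| - 1} \times \prod_{j \in J} \R_{L_j/k}\Gm$; in particular, by Shapiro's lemma,
$$H^2\bigl(\Gal(E_J/k),\, \widehat{T}^{\Gamma_{E_J}}\bigr) \simeq \Hom(\Gal(E_J/k), \QQ/\ZZ)^{|I| - 1} \oplus \bigoplus_{j \in J} \Hom(\Gal(E_J/L_j), \QQ/\ZZ).$$

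It remains to show $\tilde{\alpha} = 0$ using $\alpha_v = 0$ for almost every $v$. For $v$ unramified in the full Galois closure $E$ of $L_1 \cdots L_n / k$ the decomposition group in $\Gal(E/k)$ is cyclic, and a local analogue of the above Hochschild--Serre argument (again using the vanishing of $H^1$ on the relevant permutation modules at $v$) shows that $\alpha_v = 0$ forces the restriction of $\tilde{\alpha}$ to the decomposition group of $v$ in $\Gal(E_J/k)$ to vanish. By Chebotarev density every cyclic subgroup of $\Gal(E_J/k)$ arises in this way, so each character summand of $\tilde{\alpha}$ in the Shapiro decomposition above vanishes on every cyclic subgroup of its domain and is therefore zero. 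Hence $\tilde{\alpha} = 0$, giving $\alpha = 0$; the Hasse principle and weak approximation statements for $X$ then follow from the Brauer--Manin formalism recalled in the introduction. The main technical obstacle is precisely this last global-to-local comparison, ensuring that the local vanishing of $\alpha$ really propagates to the vanishing of each character summand of $\tilde{\alpha}$ and not merely to the vanishing of some image in a larger group; the permutation-module structure of $\widehat{T}|_{\Gamma_{E_J}}$ both globally and after localization is what makes this propagation clean.
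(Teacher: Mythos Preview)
Your argument is correct through the Hochschild--Serre identification and the computation of $\widehat{T}^{\Gamma_{E_J}} \cong \Z^{|I|-1}\oplus\bigoplus_{j\in J}\Z[L_j/k]$ as a permutation $\Gamma_k$-module. The gap is in the last ``propagation'' step, which you flag yourself but do not actually carry out.

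You assert that $\alpha_v=0$ forces $\textup{res}_{D_v}(\tilde\alpha)=0$ in $H^2(D_v,\widehat{T}^{\Gamma_{E_J}})$ by a ``local analogue'' of the Hochschild--Serre argument. But the local analogue produces an injection
\[
H^2\bigl(D_v,\widehat{T}^{\Gamma_{(E_J)_w}}\bigr)\hookrightarrow H^2(k_v,\widehat{T}),
\]
with coefficients the invariants under the \emph{local} group $\Gamma_{(E_J)_w}$, which is in general strictly larger than $\widehat{T}^{\Gamma_{E_J}}$. What you need is that the composite
\[
H^2(D_v,\widehat{T}^{\Gamma_{E_J}})\xrightarrow{\textup{inf}} H^2(k_v,\widehat{T}^{\Gamma_{E_J}})\xrightarrow{i_*} H^2(k_v,\widehat{T})
\]
be injective, and the second map need not be: from the exact sequence
\[
0\to \widehat{T}^{\Gamma_{E_J}}\to \widehat{T}\to \bigoplus_{i\in I}\widehat{\R^1_{L_i/k}\Gm}\to 0
\]
the kernel of $i_*$ at $v$ is the cokernel of $H^1(k_v,\widehat{T})\to\bigoplus_{i\in I}H^1(k_v,\widehat{\R^1_{L_i/k}\Gm})$, and this cokernel is typically nonzero (e.g.\ whenever some $L_i$, $i\in I$, has a local extension at $v$ that is not ``covered'' by the local extensions of the $L_j$, $j\in J$). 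The permutation structure of $\widehat{T}|_{\Gamma_{E_J}}$ does not help here, because it only controls cohomology over $\Gamma_{E_J}$ and its subgroups, not the map $i_*$ over $\Gamma_{k_v}$.

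The paper closes exactly this gap by bringing the field $L_I$ into the argument. It proves (its Lemma~1) that $\widehat{T}$ is a permutation module over $\Gamma_{L_I}$ as well, and, crucially, that over $\Gamma_{L_I}$ the inclusion $\widehat{T}^{\Gamma_{E_J}}\hookrightarrow\widehat{T}$ is \emph{split}. This splitting makes $H^2(L_I,\widehat{T}^{\Gamma_{E_J}})\to H^2(L_I,\widehat{T})$ injective, and combined with the inflation--restriction sequence for $E_J/k$ it yields (Lemma~2) that the restriction
\[
H^2(k,\widehat{T})\longrightarrow H^2(L_I,\widehat{T})\oplus H^2(E_J,\widehat{T})
\]
is injective. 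One concludes immediately since $\Sh^2_\omega(L_I,\widehat{T})=\Sh^2_\omega(E_J,\widehat{T})=0$ for permutation modules. In short, the missing ingredient in your approach is precisely the $\Gamma_{L_I}$-splitting of $\widehat{T}^{\Gamma_{E_J}}$ inside $\widehat{T}$; without it (or an equivalent device) the local-to-global descent you sketch does not go through.
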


\begin{remark}
Note that the assumption implies that $\cap_i L_i = k$. Note also that this result generalizes section 5 of \cite{PR} and Corollary 2.3 of \cite{W1}, by taking into account more than two field extensions. The proof is inspired by those two results.
\end{remark}

\begin{proof}
Define $M$ to be the $k$-torus $M := M_I \times M_J$ where $M_I := \prod_{i \in I} \R_{L_i/k} \Gm$ (same for $M_J$).
\begin{lemma} \label{lemma 1}
\begin{enumerate}[(i)]
	\item As a $\Gamma_{E_J}$-module (resp. as a $\Gamma_{L_I}$-module), $\widehat{T}$ is a permutation module.
	\item As a $\Gamma_{L_I}$-module, $\widehat{T} \cong \widehat{T}^{\Gamma_{E_J}} \oplus N$, where $N$ is a permutation $\Gamma_{L_I}$-module.
\end{enumerate}
\end{lemma}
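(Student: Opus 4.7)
The plan is to work with the dual of the exact sequence \eqref{exseqtori}:
$$0 \to \ZZ \xrightarrow{\iota} P \to \widehat{T} \to 0, \quad P := \bigoplus_{i=1}^n \ZZ[\Gamma_k/\Gamma_{L_i}], \quad \iota(1) = (N_1, \dots, N_n),$$
where $N_i := \sum_{e \in \Gamma_k/\Gamma_{L_i}} e$. Since each summand of $P$ is automatically a permutation module when restricted to any closed subgroup of $\Gamma_k$, the strategy for (i) is to exhibit an equivariant retraction of $\iota$ so that $\widehat{T}$ appears as an explicit direct summand of $P$. For part (ii), the strategy is to produce a $\Gamma_{L_I}$-equivariant decomposition of $P$ that isolates the $\Gamma_{E_J}$-invariants, and then pass to the quotient by $\iota(\ZZ)$.

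For (i), in each case I would locate a basis element of some $\ZZ[\Gamma_k/\Gamma_{L_{i_0}}]$ fixed by the relevant group. For $\Gamma_{E_J}$: any $j_0 \in J$ works, since $E_J$ contains the Galois closure of $L_{j_0}$ so that $\Gamma_{E_J}$ acts trivially on $\Gamma_k/\Gamma_{L_{j_0}}$. For $\Gamma_{L_I}$: pick $i_0 \in I$ and use the distinguished coset $e_0 := \Gamma_{L_{i_0}}$, which is $\Gamma_{L_I}$-fixed because $\Gamma_{L_I} \subseteq \Gamma_{L_{i_0}}$. In either case the equivariant linear form $r : P \to \ZZ$ extracting the coefficient of $e_0$ in the $i_0$-th component sends $(N_1, \dots, N_n)$ to $1$, hence provides a retraction of $\iota$. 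I conclude that $\widehat{T} \cong \ker r = \bigoplus_{i \ne i_0} \ZZ[\Gamma_k/\Gamma_{L_i}] \oplus V$, where $V$ is the submodule of $\ZZ[\Gamma_k/\Gamma_{L_{i_0}}]$ spanned by the basis elements different from $e_0$; this is visibly a permutation module for the relevant group (trivial action in the $\Gamma_{E_J}$ case, and $\Gamma_{L_I}$ permuting the remaining cosets in the $\Gamma_{L_I}$ case).

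For (ii), the hypothesis $L_I \cap E_J = k$ combined with the normality of $\Gamma_{E_J}$ in $\Gamma_k$ yields $\Gamma_{L_I}\Gamma_{E_J} = \Gamma_k$; together with $\Gamma_{L_I} \subseteq \Gamma_{L_i}$ for $i \in I$, this gives $\Gamma_{E_J}\Gamma_{L_i} = \Gamma_k$, so $\Gamma_{E_J}$ acts transitively on $\Gamma_k/\Gamma_{L_i}$. Consequently $\ZZ[\Gamma_k/\Gamma_{L_i}]^{\Gamma_{E_J}} = \ZZ \cdot N_i$ for $i \in I$, whereas $\ZZ[\Gamma_k/\Gamma_{L_j}]^{\Gamma_{E_J}} = \ZZ[\Gamma_k/\Gamma_{L_j}]$ for $j \in J$. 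For each $i \in I$, since $\Gamma_{L_I}$ fixes the coset $\Gamma_{L_i}$, replacing that basis element by $N_i$ produces a $\Gamma_{L_I}$-equivariant decomposition $\ZZ[\Gamma_k/\Gamma_{L_i}] = \ZZ \cdot N_i \oplus W_i$, where $W_i$ is the span of the other basis elements and is a permutation $\Gamma_{L_I}$-module. Assembling these yields $P = P^{\Gamma_{E_J}} \oplus Q$ as $\Gamma_{L_I}$-modules, with $Q := \bigoplus_{i \in I} W_i$; since $\iota(\ZZ) \subseteq P^{\Gamma_{E_J}}$ and $H^1(\Gamma_{E_J}, \ZZ) = 0$, quotienting by $\iota(\ZZ)$ identifies $\widehat{T}^{\Gamma_{E_J}} = P^{\Gamma_{E_J}}/\iota(\ZZ)$ and produces the desired decomposition $\widehat{T} = \widehat{T}^{\Gamma_{E_J}} \oplus Q$ as $\Gamma_{L_I}$-modules. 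The key conceptual point, and the main place where the two structures must be made compatible, is the dual role of the coset $\Gamma_{L_i}$ for $i \in I$: it is simultaneously the $\Gamma_{L_I}$-fixed basis element used to split off $\ZZ \cdot N_i$, and, under the hypothesis $L_I \cap E_J = k$, a generator whose $\Gamma_{E_J}$-orbit exhausts $\Gamma_k/\Gamma_{L_i}$ (so that $\ZZ \cdot N_i$ is exactly the $\Gamma_{E_J}$-invariant submodule).
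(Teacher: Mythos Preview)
Your proof is correct and follows essentially the same route as the paper. Both arguments use the exact sequence $0 \to \ZZ \to \widehat{M} \to \widehat{T} \to 0$, split off the copy of $\ZZ$ via a fixed basis element (a coset $\Gamma_{L_{i_0}}$ with $i_0 \in I$ for the $\Gamma_{L_I}$-case, and a trivially-acted-upon factor for the $\Gamma_{E_J}$-case), and for (ii) use the hypothesis $L_I \cap E_J = k$ to identify $\ZZ[L_i/k]^{\Gamma_{E_J}} = \ZZ \cdot N_i$ for $i \in I$ and then split this off as a $\Gamma_{L_I}$-summand. Your presentation is somewhat more explicit---you construct the retraction $r$ and the decomposition $P = P^{\Gamma_{E_J}} \oplus Q$ directly, whereas the paper phrases (ii) as ``the quotient map $\widehat{M} \to \widehat{M}/\widehat{M}^{\Gamma_{E_J}}$ splits over $\Gamma_{L_I}$, hence so does $\widehat{T} \to \widehat{T}/\widehat{T}^{\Gamma_{E_J}}$'' via a commutative diagram---but these are the same argument.
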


\begin{proof}
\begin{enumerate}[(i)]
	\item We have a natural exact sequence of $\Gamma_k$-modules (see the dual exact sequence \eqref{exseqtori}):
\begin{equation}
\label{exseq M T}
0 \to \Z \to \widehat{M} \to \widehat{T} \to 0 \, .
\end{equation}
As a $\Gamma_{E_J}$-module, $\widehat{M_J} \cong \Z^m$ is a trivial module for some integer $m$, therefore $\widehat{T}$ is isomorphic to $\widehat{M_I} \oplus \Z^{m-1}$ as a $\Gamma_{E_J}$-module, hence it is a permutation $\Gamma_{E_J}$-module.

For any $i \in I$, we have an isomorphism of $\Gamma_{L_I}$-modules $\Z[L_i/k] \cong \Z \oplus N_i$, where $N_i$ is a permutation module. Hence by \eqref{exseq M T}, $\widehat{T} \cong \Z^{\# I - 1} \oplus \widetilde{M}$ as $\Gamma_{L_I}$-modules, where $\widetilde{M} := \bigoplus_{i \in I} N_i \oplus M_J$ is a permutation $\Gamma_{L_I}$-module. Therefore $\widehat{T}$ itself is a permutation $\Gamma_{L_I}$-module.
	
	\item Consider the following commutative exact diagram of $\Gamma_k$-modules:
\begin{displaymath}
\xymatrix{
& & 0 \ar[d] & 0 \ar[d] & \\
0 \ar[r] & \Z \ar[r] \ar[d]^= & \widehat{M}^{\Gamma_{E_J}} \ar[r] \ar[d] & \widehat{T}^{\Gamma_{E_J}} \ar[r] \ar[d] & H^1(\Gamma_{E_J}, \Z) = 0 \\
0 \ar[r] & \Z \ar[r] & \widehat{M} \ar[r] \ar[d] & \widehat{T} \ar[r] \ar[d] & 0 \\
& & \widehat{M} / \widehat{M}^{\Gamma_{E_J}} \ar[r]^{\cong} \ar[d] & \widehat{T} / \widehat{T}^{\Gamma_{E_J}} \ar[d] & \\
& & 0 & 0 & \, .
}
\end{displaymath}
Since $L_I \cap E_J = k$, we have $\Z[L_i/k]^{\Gamma_{E_J}} = \Z[L_i/k]^{\Gamma_k} = \Z.\epsilon_i$ for any $i\in I$, where $\epsilon_i := \sum_{g \in \Gamma_k/\Gamma_{L_i}} g$.
We already know that $\Z[L_i/k] \cong \Z \oplus N_i$ as $\Gamma_{L_I}$-modules, therefore, since $\widehat{M_I}/\widehat{M_I}^{\Gamma_{E_J}} \cong \bigoplus_{i \in I} \Z[L_i/k]/{\Z.\epsilon_i}$, we deduce that the map of $\Gamma_{L_I}$-modules $\widehat{M_I} \to \widehat{M_I}/\widehat{M_I}^{\Gamma_{E_J}}$ splits, hence the map of $\Gamma_{L_I}$-modules $\widehat{M} \to \widehat{M}/\widehat{M}^{\Gamma_{E_J}} = \widehat{M_I}/\widehat{M_I}^{\Gamma_{E_J}}$ splits. Therefore the map of $\Gamma_{L_I}$-modules $\widehat{T} \to \widehat{T}/\widehat{T}^{\Gamma_{E_J}}$ also splits, which concludes the proof of Lemma \ref{lemma 1}.
\end{enumerate}
\end{proof}

\begin{lemma} \label{lemma 2}
The restriction map $\rho : H^2(k, \widehat{T}) \to H^2(L_I, \widehat{T}) \oplus H^2(E_J, \widehat{T})$ is injective.
\end{lemma}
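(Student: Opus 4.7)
The plan is to combine Lemma~\ref{lemma 1} with two applications of the inflation-restriction sequence: one for the Galois extension $E_J/k$ and one for the Galois extension $L_I E_J / L_I$, which is Galois precisely because $L_I \cap E_J = k$ and whose Galois group is canonically identified with $G_J := \Gal(E_J/k)$. Set $A := \widehat{T}^{\Gamma_{E_J}}$ and $A' := \widehat{T}^{\Gamma_{L_I E_J}}$.

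First I would use Lemma~\ref{lemma 1}(i): $\widehat T$ is a permutation $\Gamma_{E_J}$-module, and \emph{a fortiori} a permutation $\Gamma_{L_I E_J}$-module. Since $H^1(H, \Z) = 0$ for every profinite group $H$, Shapiro's lemma yields $H^1(E_J, \widehat T) = H^1(L_I E_J, \widehat T) = 0$. The five-term inflation-restriction sequences for the two Galois extensions then collapse to exact sequences
\[
0 \to H^2(G_J, A) \xrightarrow{\mathrm{inf}} H^2(k, \widehat T) \xrightarrow{\mathrm{res}} H^2(E_J, \widehat T),
\]
\[
0 \to H^2(G_J, A') \xrightarrow{\mathrm{inf}} H^2(L_I, \widehat T) \xrightarrow{\mathrm{res}} H^2(L_I E_J, \widehat T),
\]
compatible under the restriction map from $k$ to $L_I$. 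Now for $\alpha \in \ker\rho$: the condition $\alpha|_{E_J} = 0$ lifts $\alpha$ uniquely to some $\gamma \in H^2(G_J, A)$, and the condition $\alpha|_{L_I} = 0$, combined with the injectivity of the second inflation, forces the image $\bar\gamma$ of $\gamma$ in $H^2(G_J, A')$ to vanish.

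It thus remains to show that $H^2(G_J, A) \to H^2(G_J, A')$ is itself injective. For this I would invoke Lemma~\ref{lemma 1}(ii): as $\Gamma_{L_I}$-modules, $\widehat T = A \oplus N$ with $N$ a permutation $\Gamma_{L_I}$-module. Taking $\Gamma_{L_I E_J}$-invariants, and noting that $\Gamma_{L_I E_J} \subseteq \Gamma_{E_J}$ acts trivially on $A$, yields the $\Gamma_{L_I}$-module decomposition $A' = A \oplus N^{\Gamma_{L_I E_J}}$, which descends to a $G_J$-module decomposition via the canonical isomorphism $\Gal(L_I E_J/L_I) \simeq G_J$. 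Hence $A \hookrightarrow A'$ is split as $G_J$-modules, so the induced map on $H^2(G_J, -)$ is a split injection, forcing $\gamma = 0$ and therefore $\alpha = 0$.

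The main obstacle I anticipate is the last step: carefully verifying that the $\Gamma_{L_I}$-module splitting supplied by Lemma~\ref{lemma 1}(ii) does descend, through the identification $\Gal(L_I E_J/L_I) \simeq G_J$, to a splitting of $G_J$-modules on $A'$, and checking that all the squares in the comparison of inflation-restriction sequences indeed commute. Once this bookkeeping is in place, the two exact sequences and a straightforward diagram chase finish the proof.
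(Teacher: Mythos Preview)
Your proposal is correct and follows essentially the same route as the paper: use Lemma~\ref{lemma 1}(i) to kill $H^1(E_J,\widehat T)$ and set up the inflation-restriction sequence for $E_J/k$, transport to $L_I$ via the isomorphism $\Gal(L_I E_J/L_I)\cong\Gal(E_J/k)$ coming from $L_I\cap E_J=k$, and then use Lemma~\ref{lemma 1}(ii) (the $\Gamma_{L_I}$-direct summand $A=\widehat T^{\Gamma_{E_J}}\subset\widehat T$) to conclude injectivity. The only cosmetic difference is that the paper checks injectivity of the composite $H^2(L_I E_J/L_I,A)\xrightarrow{\mathrm{inf}}H^2(L_I,A)\xrightarrow{i_*}H^2(L_I,\widehat T)$ directly, whereas you stay at the finite level and show $H^2(G_J,A)\to H^2(G_J,A')$ is split injective; the bookkeeping you flag (compatibility of the two inflation-restriction sequences and descent of the splitting to $G_J$-modules) is routine and goes through exactly as you outline.
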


\begin{proof}
By point (i) of Lemma \ref{lemma 1}, $H^1(E_J, \widehat{T}) = 0$. Hence the inflation-restriction exact sequence is the following one:
$$0 \to H^2(E_J/k, \widehat{T}^{\Gamma_{E_J}}) \xrightarrow{\textup{inf}_{E_J/k}} H^2(k, \widehat{T}) \xrightarrow{\textup{res}_{k/E_J}} H^2(E_J, \widehat{T}) \, .$$
So it is enough to prove that the composite map
$$\rho' : H^2(E_J/k, \widehat{T}^{\Gamma_{E_J}}) \xrightarrow{\textup{inf}_{E_J/k}} H^2(k, \widehat{T}) \xrightarrow{\textup{res}_{k/L_I}} H^2(L_I, \widehat{T})$$
is injective.
But we have an isomorphism $H^2(E_J / k, \widehat{T}^{\Gamma_{E_J}}) \cong H^2(L_I.E_J / L_I, \widehat{T}^{\Gamma_{E_J}})$ since the natural map $\Gal(E_J.L_I/L_I) \xrightarrow{\cong} \Gal(E_J/k)$ is an isomorphism (because $L_I \cap E_J = k$).
So we can identify the map $\rho'$ with the following composite map
$$H^2(L_I.E_J / L_I, \widehat{T}^{\Gamma_{E_J}}) \xrightarrow{\textup{inf}} H^2(L_I, \widehat{T}^{\Gamma_{E_J}}) \xrightarrow{i_*} H^2(L_I, \widehat{T}) \, ,$$
where the first map is the inflation map for the Galois module $\widehat{T}^{\Gamma_{E_J}}$, and the second map is induced by the inclusion $i : \widehat{T}^{\Gamma_{E_J}} \to \widehat{T}$.
We have $H^1(L_I.E_J, \widehat{T}^{E_J}) = 0$ since $\widehat{T}^{E_J}$ is a constant torsion-free $\Gamma_{E_J}$-module, hence the inflation map $H^2(L_I.E_J / L_I, \widehat{T}^{\Gamma_{E_J}}) \xrightarrow{\textup{inf}} H^2(L_I, \widehat{T}^{\Gamma_{E_J}})$ is injective. By point (ii) of Lemma \ref{lemma 1}, we know that $\widehat{T}^{\Gamma_{E_J}}$ is a direct summand of $\widehat{T}$ as a $\Gamma_{L_I}$-module, hence the natural map $H^2(L_I, \widehat{T}^{\Gamma_{E_J}}) \xrightarrow{i_*} H^2(L_I, \widehat{T})$ is also injective, which concludes the proof.
\end{proof}

We now prove Theorem \ref{thm trivial intersection}.

By Lemma \ref{lemma 2}, the natural restriction map
$$\Sh^2_{\omega}(k, \widehat{T}) \to \Sh^2_{\omega}(L_I, \widehat{T})  \oplus \Sh^2_{\omega}(E_J, \widehat{T})$$
is injective.
By point (i) of Lemma \ref{lemma 1}, we know that $\widehat{T}$ is a permutation $\Gamma_{L_I}$-module (resp. $\Gamma_{E_J}$-module). Therefore we have $\Sh^2_{\omega}(L_I, \widehat{T}) = \Sh^2_{\omega}(E_J, \widehat{T}) = 0$, hence $\Sh^2_{\omega}(k, \widehat{T}) = 0$.
\end{proof}

\begin{remark}
\begin{itemize}
\item Let $k$ be a number field and $K/k$ a Galois extension of group $\bf A_4$ (the alternating group on four elements). Let $L_1$ and $L_2$ be two different degree $4$ subfields of $K$, hence $L_1\cap L_2=k$. However $L_1\cong L_2$ as $k$-algebras, hence $\R_{L_1/k} \Gm \cong \R_{L_2/k} \Gm$ as $k$-tori. Therefore exact sequence \eqref{exseqtori} implies that we have isomorphisms of $k$-tori
    $$T \cong \textup{Ker} \left(\R_{L_1/k} \Gm \times \R_{L_1/k} \Gm \rightarrow \Gm \right)\cong \R_{L_1/k} \Gm \times\R_{L_1/k}^1 \Gm \, ,$$
where the last isomorphism is defined by $(z_1, z_2) \mapsto (z_1, z_1.z_2)$.
    We know that $ \Sh^2_{\omega}(k, \widehat{\R_{L_1/k}^1 \Gm})=\Z / 2 \Z$ by \cite{Kun}, and that $\Sh^2_{\omega}(k, \widehat{\R_{L_1/k}\Gm})=0$ since $\widehat{\R_{L_1/k} \Gm}$ is a permutation module. Therefore $\Sh^2_{\omega}(k, \widehat{T}) =\Z/2\Z$ while $L_1\cap L_2=k$, hence the assumption about the Galois closure is necessary for the conclusion of Theorem \ref{thm trivial intersection} to hold.

\item Following Theorem 4.1 in \cite{CTpreprint} (see also \cite{S}, Remark 1.9.4), for all $a, b \in k^*$ such that $k(\sqrt{a}, \sqrt{b})/k$ is a biquadratic extension, if we define $L_1 := k(\sqrt{a})$, $L_2 := k(\sqrt{b})$ and $L_3 := k(\sqrt{a b})$, then we have $\Sh^2_{\omega}(k, \widehat{T}) = \Z / 2 \Z$ while $L_i\cap L_j=k$ for $1\leq i\neq j \leq 3$. Therefore the assumption $L_I \cap E_J = L_I \cap L_J = k$ is necessary for the conclusion of Theorem \ref{thm trivial intersection} to hold.
\end{itemize}
\end{remark}

\section{The general case}

We now state the main result that deals with a more general situation when the field $\bigcap_{i=1}^n L_i$ is bigger than $k$. In this case, the Hasse principle or weak approximation does not hold in general, but we have the following theorem:

\begin{theorem}
\label{main thm}
Let $L_1, \dots, L_n/k$ be finite separable field extensions.
 Define $F := \bigcap_{i=1}^n L_i$ and assume that $F/k$ is Galois. Define $T$ to be the $k$-torus of equation $\prod_{i=1}^n N_{L_i /k}(z_i) = 1$ and $S$ to be the $k$-torus of equation $N_{F/k}(w) = 1$.
Write $\{1, \dots, n\} = I \cup J$, with $I \cap J = \emptyset$ and $I, J \neq \emptyset$. Let $F_i$  be a field extension of $L_i$ such that the natural map $\Aut_k(F_i)\rightarrow \Aut_k(F)$ is surjective. Let $F_I$ (resp. $F_J$) be the composite of the fields $F_i$, $i \in I$ (resp. $i \in J$). Let $E_I$ (resp. $E_J$) be the Galois closure of the extension $F_I / F$ (resp. $F_J / F$).

If $F_I \cap E_J = F$, then
$$\Sh^2_{\omega}(k, \widehat{S}) \xrightarrow{\simeq} \Sh^2_{\omega}(k, \widehat{T}) \, .$$
\end{theorem}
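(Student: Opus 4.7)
The plan is to exhibit $T$ as an extension of $S$ by a torus that is a Weil restriction from $F$, and then to reduce everything, via Shapiro's lemma, to Theorem \ref{thm trivial intersection} applied over $F$. Define a morphism $\phi \colon T \to S$ of $k$-tori by $(z_i) \mapsto \prod_i N_{L_i/F}(z_i)$. This is well-defined because $N_{F/k} \circ N_{L_i/F} = N_{L_i/k}$, and it is surjective as a morphism of $k$-tori because each individual norm $N_{L_i/F} \colon \R_{L_i/k}\Gm \to \R_{F/k}\Gm$ already is (check on $\bar k$-points). Setting $U := \ker \phi$ yields a short exact sequence of $k$-tori $1 \to U \to T \to S \to 1$, and dually a short exact sequence of $\Gamma_k$-modules
$$0 \to \widehat{S} \to \widehat{T} \to \widehat{U} \to 0 \, .$$

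Next I identify $U$ as a Weil restriction. Since $F/k$ is Galois and $F \subseteq L_i$ for every $i$, the standard identification $F \otimes_k F \cong F^{\Gal(F/k)}$ gives $F \otimes_k L_i \cong L_i^{\Gal(F/k)}$ as $F$-algebras, from which one computes $U \cong \Res_{F/k}(T_0)$ as $k$-tori, where
$$T_0 := \ker\bigl(\textstyle\prod_i \R_{L_i/F}\Gm \xrightarrow{\prod_i N_{L_i/F}} \Gm\bigr)$$
is the multinorm $F$-torus attached to the extensions $L_1/F, \dots, L_n/F$. Shapiro's lemma, applied to $\widehat{U} = \textup{Ind}_{\Gamma_F}^{\Gamma_k}\widehat{T_0}$ (combined with the Mackey decomposition for restriction to decomposition groups, using that $\Gamma_F$ is normal in $\Gamma_k$), yields canonical isomorphisms $\Sh^i_\omega(k, \widehat{U}) \simeq \Sh^i_\omega(F, \widehat{T_0})$ for every $i$.

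It now suffices to prove $\Sh^i_\omega(F, \widehat{T_0}) = 0$ for $i = 1, 2$. For $i = 1$: the exact sequence $0 \to \Z \to \widehat{M_0} \to \widehat{T_0} \to 0$ (with $\widehat{M_0}$ a permutation module) embeds $H^1(F, \widehat{T_0})$ into $H^2(F, \Z) = \Hom_{cts}(\Gamma_F, \Q/\Z)$; a character of $\Gamma_F$ that vanishes on the Frobenius at almost all places is zero by Chebotarev. For $i = 2$: this is precisely Theorem \ref{thm trivial intersection} applied over $F$ to the extensions $L_i/F$ (note that $\bigcap_i L_i = F$) with the partition $I \sqcup J$; the hypothesis $L_I \cap E'_J = F$ (with $E'_J$ the Galois closure of $L_J$ over $F$) follows from our assumption $F_I \cap E_J = F$ via the inclusions $L_I \subseteq F_I$ and $E'_J \subseteq E_J$.

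Finally, the short exact sequence of $\Gamma_k$-modules produces, by a snake-lemma argument applied to the finite approximations $\Sh^i_{\sV} := \ker(H^i(k,-) \to \prod_{v \notin \sV} H^i(k_v, -))$ for finite $\sV$ and then passed to the direct limit, an exact sequence
$$\Sh^1_\omega(k, \widehat{U}) \to \Sh^2_\omega(k, \widehat{S}) \to \Sh^2_\omega(k, \widehat{T}) \to \Sh^2_\omega(k, \widehat{U}) \, .$$
Both flanking terms vanish by the previous step, giving the desired isomorphism. The most delicate point is the identification $U \simeq \Res_{F/k}(T_0)$, on which the Shapiro reduction rests; once this is in place, everything else is formal. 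The surjectivity hypothesis $\Aut_k(F_i) \twoheadrightarrow \Aut_k(F)$ does not play a direct role in this particular approach (it enters only through the flexibility it provides in verifying $F_I \cap E_J = F$ for various natural choices of $F_i$), though it likely plays a structural role in an alternative, more direct proof that analyzes $\widehat{T}$ as a $\Gamma_F$-module in the style of Lemma \ref{lemma 1}.
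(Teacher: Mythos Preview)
Your setup matches the paper's: the kernel $U$ of $T \to S$ is indeed $\Res_{F/k}(T_0)$ (the paper calls it $R = \Res_{F/k}(R')$), and Shapiro gives $H^i(k,\widehat{U}) \cong H^i(F,\widehat{T_0})$. The injectivity of $\Sh^2_\omega(k,\widehat S) \to \Sh^2_\omega(k,\widehat T)$ is also fine, because in fact $H^1(k,\widehat U)=H^1(F,\widehat{T_0})=0$ (the kernel of $H^2(F,\Z)\to\bigoplus_i H^2(L_i,\Z)$ vanishes since $\bigcap_i L_i=F$), so the whole map $H^2(k,\widehat S)\to H^2(k,\widehat T)$ is injective.

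The gap is the claimed exact sequence
\[
\Sh^1_\omega(k,\widehat U)\to\Sh^2_\omega(k,\widehat S)\to\Sh^2_\omega(k,\widehat T)\to\Sh^2_\omega(k,\widehat U).
\]
No snake-lemma argument produces this: $\Sh_\omega$ is a kernel of a map of long exact sequences, not a cohomology functor, and kernels of a morphism of exact sequences form only a complex, not an exact sequence. Concretely, take $\gamma\in\Sh^2_\omega(k,\widehat T)$. Its image in $H^2(k,\widehat U)$ lies in $\Sh^2_\omega(k,\widehat U)=0$, so $\gamma$ lifts to some $\alpha\in H^2(k,\widehat S)$; but for a place $v$ with $\gamma_v=0$ you only know that $\alpha_v$ lies in the image of $H^1(k_v,\widehat U)\to H^2(k_v,\widehat S)$, which is in general nonzero. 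So nothing forces $\alpha\in\Sh^2_\omega(k,\widehat S)$.

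This is precisely the point where the paper's proof becomes non-formal and where the hypothesis $\Aut_k(F_i)\twoheadrightarrow\Aut_k(F)$ is used, contrary to your remark that it plays no direct role. The paper shows that any such preimage $\alpha$ satisfies $\alpha|_F=0$ in $H^2(F,\widehat{S_F})$, by proving $\Sh^2_\omega(F,\widehat{T_F})=0$. Note $T_F$ is the multinorm torus over $F$ for \emph{all} the conjugates $\tilde\sigma(L_i)$, $\sigma\in\Gal(F/k)$, not just for the $L_i$ themselves; the surjectivity hypothesis guarantees lifts $\tilde\sigma\in\Aut_k(F_i)$ with $\tilde\sigma(L_i)\subset F_i$, so that the composites over $I$ and $J$ stay inside $F_I$ and $F_J$ and Theorem~\ref{thm trivial intersection} applies. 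Once $\alpha|_F=0$, one has $\alpha\in H^2(F/k,\widehat S)\cong H^3(F/k,\Z)$, and since $H^3(\langle g\rangle,\Z)=0$ for every cyclic subgroup, this already sits inside $\Sh^2_\omega(k,\widehat S)$. Your argument, which only establishes $\Sh^2_\omega(F,\widehat{T_0})=0$, does not control the conjugate pieces and therefore does not close the surjectivity step.
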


This theorem implies the following corollary, which proves the "weak approximation analogue" of the conjecture by Pollio and Rapinchuk about Hasse principle for multinorm tori (see the introduction and the conjecture in section 4 of \cite{PR}):
\begin{corollary}
\label{cor1}
Under the same assumptions as in Theorem \ref{main thm}, let $a \in k^*$. Assume that the $k$-variety of equation $\prod_i N_{L_i/k}(z_i) = a$ has a $k$-point. Then weak approximation holds for the equation $\prod_i N_{L_i/k}(z_i) = a$ if it holds for the equation $N_{F/k}(w) = a$.
\end{corollary}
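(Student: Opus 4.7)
The plan is to exploit the natural morphism of $k$-tori $\phi : T \to S$ given on points by $(z_i)_i \mapsto \prod_i N_{L_i/F}(z_i)$, which is well-defined because $F \subseteq L_i$ for each $i$ and $N_{L_i/k} = N_{F/k} \circ N_{L_i/F}$. This morphism lifts to a $k$-morphism $f : X \to Y$ of principal homogeneous spaces, and dually induces a functorial map $\phi^* : H^2(k, \widehat{S}) \to H^2(k, \widehat{T})$ that restricts to maps between the $\Sh^2$ and $\Sh^2_{\omega}$ subgroups. The first step is to verify, by inspecting the construction in the proof of Theorem \ref{main thm}, that the canonical isomorphism $\Sh^2_{\omega}(k, \widehat{S}) \xrightarrow{\simeq} \Sh^2_{\omega}(k, \widehat{T})$ it provides is realised by this map $\phi^*$.

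Granting this, the strategy runs as follows. Since $X(k) \neq \emptyset$, the morphism $f$ yields a rational point on $Y$, so in particular $Y(k_v) \neq \emptyset$ for every $v$. The hypothesis that weak approximation holds for $N_{F/k}(w) = a$ then asserts that $Y(k)$ is dense in $\prod_v Y(k_v)$. Fixing a rational point of $Y$ to identify $Y \cong S$ as $k$-varieties, the Voskresenski{\u\i} exact sequence \eqref{exseqVos} applied to $S$ converts this density into the equality
$$\Sh^2_{\omega}(k, \widehat{S}) = \Sh^2(k, \widehat{S}).$$
Conversely, weak approximation for $X$ (using the rational point of $X$ to identify $X \cong T$) amounts, via \eqref{exseqVos} applied to $T$, to the analogous equality $\Sh^2_{\omega}(k, \widehat{T}) = \Sh^2(k, \widehat{T})$, which is what remains to be proved.

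The bridge between the two equalities is a short diagram chase: for arbitrary $\alpha \in \Sh^2_{\omega}(k, \widehat{T})$, Theorem \ref{main thm} supplies $\beta \in \Sh^2_{\omega}(k, \widehat{S})$ with $\phi^*(\beta) = \alpha$; the first equality places $\beta$ in $\Sh^2(k, \widehat{S})$, and functoriality of $\phi^*$ then places $\alpha = \phi^*(\beta)$ in $\Sh^2(k, \widehat{T})$. Since the reverse inclusion is tautological, the required equality follows.

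The main obstacle is the step identifying the canonical isomorphism in Theorem \ref{main thm} with the natural pullback $\phi^*$: the statement merely asserts existence of such an isomorphism, and running the functoriality argument above requires confirming that this canonical map is indeed induced by the norm morphism $T \to S$. Every other step is formal and reduces to the functoriality of the Voskresenski{\u\i} sequence together with the standard functoriality of $\Sh^2$ and $\Sh^2_{\omega}$.
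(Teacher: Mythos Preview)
Your proof is correct and follows essentially the same route as the paper's. The paper's argument is slightly terser: it observes that weak approximation for $N_{F/k}(w)=a$ is equivalent to $\Sh^2_{\omega}(k,\widehat{S})/\Sh^2(k,\widehat{S})=0$, notes that Theorem~\ref{main thm} makes the natural map $\Sh^2_{\omega}(k,\widehat{S})/\Sh^2(k,\widehat{S}) \to \Sh^2_{\omega}(k,\widehat{T})/\Sh^2(k,\widehat{T})$ surjective, and concludes via Voskresenski{\u\i}. Your diagram chase is the same argument unpacked. The ``main obstacle'' you flag is not a real obstacle: the proof of Theorem~\ref{main thm} constructs the isomorphism directly from the exact sequence $0 \to \widehat{S} \to \widehat{T} \to \widehat{R} \to 0$ dual to $0 \to R \to T \to S \to 0$, so the map in question is visibly the one induced by your $\phi$.
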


\begin{proof}
Assume that weak approximation holds for the equation $N_{F/k}(w) = a$. It is equivalent to say that $\Sh^2_{\omega}(k, \widehat{S}) / \Sh^2(k, \widehat{S}) = 0$. Theorem \ref{main thm} implies that the natural map
$$\Sh^2_{\omega}(k, \widehat{S}) / \Sh^2(k, \widehat{S}) \to \Sh^2_{\omega}(k, \widehat{T}) / \Sh^2(k, \widehat{T})$$
is surjective, hence $\Sh^2_{\omega}(k, \widehat{T}) / \Sh^2(k, \widehat{T}) = 0$, which implies by Voskresenski{\u\i}'s exact sequence \eqref{exseqVos} that $T$, hence the $k$-variety of equation $\prod_i N_{L_i/k}(z_i) = a$, satisfies weak approximation.
\end{proof}

We also get a particular case of their conjecture concerning the multinorm Hasse principle:
\begin{corollary}
\label{cor2}
Under the same assumptions as in Theorem \ref{main thm}, assume that the Hasse principle and weak approximation hold for equations $N_{F/k}(w) = a$ (for all $a \in k^*$). Then the Hasse principle and weak approximation hold for equations $\prod_i N_{L_i/k}(z_i) = a$ (for all $a \in k^*$).
\end{corollary}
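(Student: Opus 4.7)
The plan is to translate both the hypothesis and the conclusion into vanishing statements for Tate--Shafarevich groups of character modules, and then to invoke Theorem \ref{main thm} as a black box. The key observation is that the Hasse principle together with weak approximation for the whole family of norm equations $N_{F/k}(w) = a$ ($a \in k^*$) is equivalent to the single statement $\Sh^2_{\omega}(k, \widehat{S}) = 0$, and similarly for $T$ in place of $S$ with the multinorm equations.

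First, I would reformulate the hypothesis. Using the short exact sequence $1 \to S \to \R_{F/k}\Gm \xrightarrow{N_{F/k}} \Gm \to 1$ together with Hilbert 90, every class of $H^1(k, S)$ arises as the class of a principal homogeneous space $Y_a$ defined by $N_{F/k}(w) = a$ for some $a \in k^*$, and $\Sh^1(k, S)$ consists of exactly those classes which are locally trivial at every place, i.e.\ those $a$ which are local norms everywhere. Hence the Hasse principle for every $Y_a$ amounts to $\Sh^1(k, S) = 0$, equivalently, by global duality for tori, to $\Sh^2(k, \widehat{S}) = 0$. Granted this, weak approximation on each such $Y_a$ (equivalently on the torus $S$ itself) is governed by Voskresenski{\u\i}'s sequence \eqref{exseqVos} and amounts to $\Sh^2_{\omega}(k, \widehat{S}) / \Sh^2(k, \widehat{S}) = 0$. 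Combining both halves of the hypothesis yields exactly $\Sh^2_{\omega}(k, \widehat{S}) = 0$.

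Next, Theorem \ref{main thm} produces the canonical isomorphism $\Sh^2_{\omega}(k, \widehat{S}) \xrightarrow{\simeq} \Sh^2_{\omega}(k, \widehat{T})$, so $\Sh^2_{\omega}(k, \widehat{T}) = 0$, and in particular both $\Sh^2(k, \widehat{T}) = 0$ and $\Sh^2_{\omega}(k, \widehat{T}) / \Sh^2(k, \widehat{T}) = 0$. Running the same dictionary in reverse for $T$, now based on \eqref{exseqtori} (noting that $H^1(k, \prod_i \R_{L_i/k}\Gm) = 0$ by Shapiro's lemma plus Hilbert 90, so that every element of $\Sh^1(k, T)$ is represented by some $X_a$ with $a \in k^*$), the first vanishing delivers the Hasse principle for every equation $\prod_i N_{L_i/k}(z_i) = a$, and the second, applied via \eqref{exseqVos} to any identification $X_a \simeq T$ given by the $k$-point just produced, delivers weak approximation.

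There is essentially no genuine obstacle here: the corollary is a formal consequence of Theorem \ref{main thm} via the standard duality dictionary between HP/WA and Tate--Shafarevich groups of characters. The only mild point requiring attention is that HP and WA have to be assembled simultaneously, which is precisely the reason why $\Sh^2_{\omega}$ --- and not $\Sh^2$ and its quotient treated in isolation --- is the natural object to track on both sides.
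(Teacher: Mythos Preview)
Your proof is correct and follows essentially the same approach as the paper: translate the hypothesis into $\Sh^2_{\omega}(k,\widehat{S})=0$ via the standard duality dictionary, apply Theorem~\ref{main thm} to obtain $\Sh^2_{\omega}(k,\widehat{T})=0$, and translate back. The paper's own argument is the two-line version of exactly this, simply asserting the equivalences you have taken the trouble to spell out.
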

In particular, this corollary contains the case of two Galois extensions $L_1, L_2$ of $k$ such that $L_1 \cap L_2$ is a cyclic field extension of $k$: this case was one motivation for the conjecture in \cite{PR} (see the remark after the conjecture in section 4 of \cite{PR}).

\begin{proof}
The assumption means exactly that $\Sh^2_{\omega}(k, \widehat{S}) = 0$. Hence Theorem \ref{main thm} implies that $\Sh^2_{\omega}(k, \widehat{T}) = 0$, which means that both the Hasse principle and weak approximation hold for equations $\prod_i N_{L_i/k}(z_i) = a$ (for all $a \in k^*$).
\end{proof}

We now prove Theorem \ref{main thm}.

\begin{proof}

Let $R'$ be the $F$-torus defined by the equation $\prod_i N_{L_i /F}(z_i) = 1$ and $R:= \R_{F/k}(R')$.

We have an exact sequence of $k$-tori:
$$0 \to R \to T \to S \to 0 \, ,$$
where the morphism $T \to S$ is given by $w = \prod_i N_{L_i/F}(z_i)$.

The dual exact sequence of Galois modules
$$0 \to \widehat{S} \to \widehat{T} \to \widehat{R} \to 0$$
induces a long exact sequence
$$H^1(k,\widehat{R}) \to H^2(k,\widehat{S}) \to H^2(k,\widehat{T}) \to H^2(k,\widehat{R}) \, .$$
We know that $H^i(k,\widehat{R}) = H^i(F,\widehat{R'})$.

We first prove that $\Sh^2_{\omega}(k, \widehat{R}) = 0$. First, we have a canonical isomorphism $\Sh^2_{\omega}(k, \widehat{R}) \cong \Sh^2_{\omega}(F, \widehat{R'})$. Since the Galois closure of $L_J / F$ is contained in the Galois extension $E_J / F$, the assumption $F_I \cap E_J = F$ implies that the intersection between $L_I$ and the Galois closure of $L_J/F$ is also $F$, therefore the field extensions $L_i/F$ fulfill the assumptions of Theorem  \ref{thm trivial intersection} (over the base field $F$). Therefore Theorem  \ref{thm trivial intersection} ensures that $\Sh^2_{\omega}(F, \widehat{R'}) = 0$ (see also \cite{PR}, section 5 or \cite{W1}, Corollary 2.3 in some particular cases).

Therefore $\Sh^2_{\omega}(k, \widehat{T})$ is contained in the image of the map $H^2(k,\widehat{S}) \to H^2(k,\widehat{T})$.

Let us prove now that the map $H^2(k,\widehat{S}) \to H^2(k,\widehat{T})$ is injective. We have
$$H^1(k,\widehat{R}) = H^1(F,\widehat{R'}) = \textup{Ker}(H^2(F, \Z) \to \bigoplus_i H^2(L_i, \Z)) \, .$$
Since $F =\bigcap_i  L_i$, we know that the map $H^2(F, \Z) \to \bigoplus_i H^2(L_i, \Z)$ is injective, therefore $H^1(k,\widehat{R}) = 0$, hence the map $H^2(k,\widehat{S}) \to H^2(k,\widehat{T})$ is injective.

Now we start to prove that the kernel of the map $\psi : H^2(k, \widehat{S}) \to H^2(k, \widehat{T}) / \Sh^2_{\omega}(k, \widehat{T})$ is equal to $\Sh^2_{\omega}(k, \widehat{S})$. This kernel clearly contains $\Sh^2_{\omega}(k, \widehat{S})$. Let us prove the converse inclusion.

First we show that $\Sh^2_{\omega}(F, \widehat{T_F})=0$.

Since $F/k$ is Galois, $L_i \otimes_k F\cong \prod_{\sigma \in \Gal(F/k)} L_i^{\sigma} \cong \prod_{\sigma \in \Gal(F/k)} \tilde{\sigma}(L_i)$ as $F$-algebras, where $L_i^{\sigma}$ denotes the field $L_i$ endowed with the $F$-algebra structure given by the morphism $F \xrightarrow{\sigma} F \to L_i$, and $\tilde{\sigma} \in \Aut_k(F_i)$ is a (chosen) lift of $\sigma$ (by assumption, the natural map $\Aut_k(F_i)\rightarrow \Gal(F/k)$ is surjective).

Therefore, the $F$-torus $T_F$ is defined by the following equation inside $\prod_{i, \sigma} R_{\tilde{\sigma}(L_i)/F} \Gm $:
$$\prod_{i, \sigma} N_{\tilde{\sigma}(L_i)/F}(z_{i, \sigma}) = 1 \, .$$

We first notice that $\bigcap_{i, \sigma} \tilde{\sigma}(L_i) = F$. Let $\tilde L_I$ (resp. $\tilde L_J$) be the composite of the fields $\tilde{\sigma}(L_i)$ where $\sigma\in \Gal(F/k)$ and $i\in I$ (resp. $i\in J$). Since $\tilde{\sigma}(L_i) \subset F_i$, it implies that $\tilde L_I\subset  F_I$ and $\tilde L_J\subset F_J$. Since  $F_I\cap E_J=F$, we have $\tilde L_I \cap E_J = F$, hence the intersection between $\tilde L_I$ and the Galois closure of $\tilde L_J / F$ is also $F$, therefore $\Sh^2_{\omega}(F, \widehat{T_F})=0$ by Theorem \ref{thm trivial intersection} applied to the field extensions $\tilde{\sigma}(L_i)/F$.

Consider the following commutative diagram
\begin{displaymath}
\xymatrix{
H^2(k, \widehat{S}) \ar[r]^{\psi} \ar[d] & H^2(k, \widehat{T}) / \Sh^2_{\omega}(k, \widehat{T}) \ar[d] \\
H^2(F, \widehat{S_F}) \ar[r]^(.3){\psi_F} & H^2(F, \widehat{T_F}) / \Sh^2_{\omega}(F, \widehat{T_F}) = H^2(F, \widehat{T_F}) \, .
}
\end{displaymath}

We now prove that the map $\psi_F$ in the above diagram is injective, by showing that $H^1(F, \widehat{R}) = 0$.
Note that by definition $R_F$ is identified with the kernel of the product of norm maps
$$\prod_{i=1}^n \R_{L_i \otimes_k F / F} \G_m \to \R_{F \otimes_k F / F} \G_m \, ,$$
hence we get an isomorphism
$$H^1(F,\widehat{R}) = \bigoplus_{\sigma \in \Gal(F/k)} \textup{Ker} \left(H^2(F, \Z) \to \bigoplus_{i} H^2(\tilde{\sigma}(L_i), \Z)\right) \, .$$
Since  $F_I\cap  E_J=F$, it implies that $F = \bigcap_{i}  \tilde{\sigma}(L_i)$ for all $\sigma \in \Gal(F/k)$. Therefore the map $H^2(F, \Z) \to \bigoplus_{i} H^2(\tilde{\sigma}(L_i), \Z)$ is injective  for all $\sigma \in \Gal(F/k)$, hence $H^1(F,\widehat{R}) = 0$.

Let $\alpha \in \textup{Ker}(\psi)$. Then $\alpha_F = 0$ in $H^2(F, \widehat{S_F})$. Hence $\alpha \in H^2(F/k, \widehat{S})$.
Considering the long exact sequence associated to the exact sequence
$$0 \to \Z \to \Z[F/k] \to \widehat{S} \to 0 \, ,$$
we get an isomorphism $H^2(F/k, \widehat{S}) \xrightarrow{\simeq} H^3(F/k, \Z)$. For any $g \in \textup{Gal}(F/k)$, we have $H^3(<g>, \Z) = 0$, hence we get that $H^2(F/k, \widehat{S}) \subset \Sh^2_{\omega}(k, \widehat{S})$ (in fact, this is an equality). Hence $\alpha \in \Sh^2_{\omega}(k, \widehat{S})$, i.e. $\textup{Ker}(\psi) \subset \Sh^2_{\omega}(k, \widehat{S})$.

It concludes the proof: the map $H^2(k, \widehat{S}) \to H^2(k, \widehat{T})$ induces an isomorphism
$$\Sh^2_{\omega}(k, \widehat{S}) \cong \Sh^2_{\omega}(k, \widehat{T}) \, .$$
\end{proof}

\begin{example}
\begin{enumerate}[(i)]
	\item The assumptions of Theorem \ref{main thm} hold if $n=2$ and $L_1, L_2/k$ are Galois.
	\item They also hold if $n \geq 2$, $L_1, \dots , L_n/k$ are Galois and $(L_1 \dots L_r) \cap (L_{r+1} \dots L_{n}) = F$ (for some $1 \leq r < n$).
	\item Let $L_I$ (resp. $L_J$) be the composite of the fields $L_i$, $i \in I$ (resp. $i \in J$). Let $\tilde{E}_I$ (resp. $\tilde{E}_J$) be the Galois closure of the extension $L_I / k$ (resp. $L_J / k$). The assumptions of Theorem \ref{main thm} also hold if $\tilde{E}_I \cap \tilde{E}_J = F$.

    \item Let $k=\Q$ and $L_1 := \Q(\sqrt{2},\sqrt[4]{3})$ and $L_2 := \Q(\sqrt[4]{2})$. Then $F = L_1 \cap L_2 =\Q(\sqrt{2})$. We can choose $F_1=L_1$ and $F_2 = \Q(\sqrt{-1}, \sqrt[4]{2})$. Since $E_2=F_2$, it implies $F_1 \cap E_2= F$. Therefore we get $\Sh^2_{\omega}(\widehat{T}) \cong \Sh^2_{\omega}(k, \widehat{S}) = 0$ (since $F/\Q$ is cyclic) while $\tilde{E}_1\cap \tilde{E}_2=\Q(\sqrt{-1},\sqrt{2})\neq F$ since $\tilde{E}_1=\Q(\sqrt{-1},\sqrt{2},\sqrt[4]{3})$ and $\tilde{E}_2=E_2$. Therefore Theorem \ref{main thm} is more general than point (iii).

\end{enumerate}
\end{example}

The following example shows that even in the case of two field extensions, some assumptions about Galois closures have to be made for Theorem \ref{main thm} to hold.
\begin{example}
Let $k$ be a number field, $a\in k \text{ and }b, d\in k^*$. Let $L_1=k(\sqrt{a-b\sqrt{d}})$ be  a field extension of $k$ of degree $4$. Suppose $m:=a^2-b^2d$ is not a square in $k(\sqrt{d})$ ($eg.$ $k=\Q \text { and } (a,b,d,m)=(1,1,2,-1)$), hence $L_1/k$ is non-Galois. Let $L_2=k(\sqrt{d},\sqrt{m})$. Then $F=L_1\cap L_2=k(\sqrt{d})$ and
$$\Sh^2_{\omega}(k, \widehat{T})=\Z/2\Z \text{ while } \Sh^2_{\omega}(k, \widehat{S})=0.$$
\end{example}

\begin{proof} Note that $F_1\supset L_1(\sqrt{m})$, hence $F_1\cap F_2\supset F_1\cap L_2=L_2\neq F$, hence the assumptions of Theorem \ref{main thm} are not satisfied.

Let $G := \Gal(L/k) \cong \D_4$ where $L := L_1.L_2$ is the Galois closure of $L_1/k$, let $\widehat{M}$ be the permutation $G$-module $\Z[L_2/k]$ and $\widehat{N} := \Z[L_1/k] / \Z$. Then we have an exact sequence of $G$-modules
$$0 \to \widehat{M} \to \widehat{T} \to \widehat{N} \to 0 \, ,$$
hence an exact sequence of cohomology groups:
$$0 = H^1(G, \widehat{M}) \to H^1(G, \widehat{T}) \xrightarrow{\rho} H^1(G, \widehat{N}) \to H^2(G, \widehat{M}) \to H^2(G, \widehat{T}) \to H^2(G, \widehat{N}) \, .$$
But we have isomorphisms of finite groups
$$H^1(G, \widehat{T}) \cong \textup{Ker}(H^1(L/k, \Q / \Z) \to H^1(L/L_1, \Q/\Z) \oplus H^1(L/L_2, \Q/\Z)) = H^1(F/k, \Q / \Z)$$
and
$$H^1(G, \widehat{N}) \cong \textup{Ker}(H^1(L/k, \Q / \Z) \to H^1(L/L_1, \Q/\Z)) = H^1(F/k, \Q / \Z) \, ,$$
therefore $\rho$ is an isomorphism. Hence we get a commutative diagram with exact rows
\begin{equation} \label{diag sha ex}
\xymatrix{
0 \ar[r] & H^2(G, \widehat{M}) \ar[r] \ar[d]^{\phi = (\phi_g)} & H^2(G, \widehat{T}) \ar[r] \ar[d] & H^2(G, \widehat{N}) \ar[d] \\
& \prod_{g \in G} H^2(\langle g \rangle, \widehat{M}) \ar[r]^{\psi = (\psi_g)} & \prod_{g \in G} H^2(\langle g \rangle, \widehat{T}) \ar[r] & \prod_{g \in G} H^2(\langle g \rangle, \widehat{N}) \, .
}
\end{equation}
Since $G \cong \D_4$ and $L$ is the Galois closure of $L_1/k$, Proposition 1 in \cite{Kun} implies that $\Sh^2_{\omega}(k, \widehat{N}) = 0$. So we deduce from an easy diagram chase in \eqref{diag sha ex} that
$$\textup{Ker}(\psi \circ \phi) \xrightarrow{\simeq} \Sh^2_{\omega}(k, \widehat{T}) \, .$$

We now prove that $\psi \circ \phi= 0$.
For any $g \in G$, let $L^g$ denote the subfield of $L$ fixed by $g$. We consider the two following cases:
\begin{enumerate}[(i)]
	\item if $g \notin \Gal(L/F')$, where $F' := k(\sqrt{m})$. Then $g$ has order $2$ (since $G \cong \D_4$), and $L^g \neq L_2$. Then there exists a quadratic extension $K'/k$ contained in $L_2$ such that $L^g.K' = L$. Therefore $L^g \otimes_k L_2 = L \otimes_{K'} L_2 = L \oplus L$. Therefore
$$H^2(\langle g \rangle, \widehat{M}) = H^2(L/L^g, \Z[L_2/k]) = H^2(L/L^g, \Z^2[L/L^g]) = 0 \, ,$$
hence $\phi_g = 0$ and in particular $\psi_g \circ \phi_g = 0$.
	\item if $g \in \Gal(L/F') \cong \Z / 4 \Z$, then by functoriality, it is enough to consider the case when $\langle g \rangle = \Gal(L/F')$ (if $\psi_g \circ \phi_g = 0$ for some $g$ such that $\langle g \rangle = \Gal(L/F')$, then $\psi_{g'} \circ \phi_{g'} = 0$ for all $g' \in \Gal(L/F')$, since $\psi_{g'} \circ \phi_{g'}$ factors through $\psi_g \circ \phi_g$). So we now assume that $g$ has order $4$. Let $\widehat{M'} := \Z[L_2/k] / \Z$ and $\widehat{N'} := \Z[L_1/k]$. We have a natural exact sequence:
$$
0 \to \widehat{N'} \to \widehat{T} \to \widehat{M'} \to 0 \, ,
$$
hence an exact sequence
$$H^2(L/F', \widehat{N'}) \to H^2(L/F', \widehat{T}) \to H^2(L/F', \widehat{M'}) \, .$$
But we have $H^2(L/F', \widehat{N'}) = H^2(L/F', \Z[L_1/k]) = 0$ since $F'.L_1 = L$. Hence $H^2(L/F', \widehat{T}) \to H^2(L/F', \widehat{M'})$ is injective. Therefore it is enough to prove that the composite map
$$H^2(G, \Z[L_2 / k]) \xrightarrow{\phi_g} H^2(L/F', \Z[L_2/k]) \xrightarrow{\psi'_g} H^2(L/F', \widehat{M'})$$
is the zero map. But $\psi'_g$ factors through the cokernel of the natural map $i_g : H^2(L/F', \Z) \to H^2(L/F', \Z[L_2/k])$, hence we only need to prove that the composite map
$$H^2(G, \Z[L_2 / k]) \xrightarrow{\phi_g} H^2(L/F', \Z[L_2/k]) \xrightarrow{\psi''_g} \textup{Coker}(i_g)$$
is zero.

The image of $\phi_g$ in $H^2(L/F', \Z[L_2/k])$ is $\Gal(F'/k)$-invariant, hence we only need to show that $\psi''_g$ restricted to $H^2(L/F', \Z[L_2/k])^{\Gal(F'/k)}$ is zero. Since $\Z[L_2/k] \cong \Z[F/k]\otimes\Z[F'/k]$ canonically as $G$-modules, it implies that $$H^2(L/F', \Z[L_2/k])\cong H^2(L/F', \Z[F/k])\otimes\Z[F'/k]$$
as $\Gal(F'/k)$-modules. Let $\sigma$ is the unique nontrivial element of $\Gal(F'/k)$, then  $\sigma$ induces an isomorphism $H^2(L/F', \Z[F/k])\rightarrow H^2(L/F', \Z[F/k]),\chi \mapsto \chi^{\sigma}$. Let $$(\chi_1,\chi_2)\in  \prod_{\Gal(F'/k)}H^2(L/F', \Z[F/k])\cong H^2(L/F', \Z[F/k])\otimes\Z[F'/k].$$ Then $\sigma(\chi_1,\chi_2)=(\chi_2^{\sigma},\chi_1^{\sigma})$ by the definition of the action of $\Gal(F'/k)$ on $H^2(L/F', \Z[L_2/k])$. Since $H^2(L/F', \Z[F/k])\cong H^2(L/L_2,\Z)\cong \Z/2\Z$, we have $\chi_i^{\sigma}=\chi_i$ for $i=1,2$. Therefore $$H^2(L/F', \Z[L_2/k])^{\Gal(F'/k)}=\{(\chi,\chi)\mid \chi\in H^2(L/F', \Z[F/k])\},$$ hence it is just the image of $i_g$, so its image by $\psi''_g$ is zero.

Then we deduce that $\psi''_g \circ \phi_g = 0$, hence $\psi_g \circ \phi_g = 0$.
\end{enumerate}
So we proved that $\psi \circ \phi = 0$, hence we get
$$\Sh^2_{\omega}(k, \widehat{T}) = H^2(G, \widehat{M}) \cong H^2(L/L_2, \Z) \cong \Z / 2\Z,$$
which concludes the proof.
\end{proof}

\begin{remark}
We give here a heuristic reason why corollaries \ref{cor1} and \ref{cor2} hold, while the original conjecture by Pollio and Rapinchuk does not (see Proposition \ref{prop counterex} below).

From a geometric point of view, if we denote by $X$ the $k$-variety defined by the equation $\prod_{i=1}^n N_{L_i/k}(z_i) = a$ and by $Y$ the $k$-variety defined by $N_{F/k}(w)=a$, then we have a natural morphism $\pi : X \to Y$, defined by $w = \pi((z_i)) := \prod_{i=1}^n N_{L_i/F}(z_i)$. Clearly, the map $\pi : X \to Y$ is a torsor under the $k$-torus $R$ defined at the beginning of the proof of Theorem \ref{main thm}. Theorem \ref{thm trivial intersection} ensures that the rational fibers of the morphism $\pi$ (which are $k$-torsors under $R$) satisfy the Hasse principle and weak approximation. In the statements of the conjecture or of the corollaries, one deduces local-global principles for the total space $X$ of the fibration $\pi$ from local-global principles for the base space $Y$ of this fibration. A classical way to prove such results (since we know that the fibers of $\pi$ do satisfy local-global principles) is the fibration method (see for instance \cite{CT}, section 3).

And the key point is that in general, for such a fibration $\pi : X \to Y$, one cannot deduce the Hasse principle for $X$ from the Hasse principle for the basis $Y$ and the Hasse principle for rational fibers of $\pi$.  But by classical results, one can sometimes prove that the Hasse principle holds for $X$ \emph{assuming that $Y$ satisfies both the Hasse principle and weak approximation} (see \cite{CT}, section 3). And indeed, in the multinorm situation, Proposition \ref{prop counterex} below shows that the fibration method and the Hasse principle on $X$ fail due to the failure of weak approximation on $Y$, while Theorem \ref{main thm} implies that under the stronger assumption that $Y$ does satisfy the Hasse principle and weak approximation, the variety $X$ does satisfy the Hasse principle.

However, we did not manage to write a direct geometric proof of Theorem \ref{main thm}, nor of its corollaries, via the fibration method and Theorem \ref{thm trivial intersection}, since this method requires either that the basis $Y$ of the fibration is proper or that $Y$ satisfies strong approximation, which is not the case in our situation. Nevertheless, the general framework of fibration methods gives an explanation why our results hold while the original conjecture does not.

\end{remark}

\section{A counterexample to the original conjecture}

We now construct a counterexample to the original conjecture of Pollio and Rapinchuk (see introduction), relative to the multinorm Hasse principle:

\begin{proposition} \label{prop counterex}
Let $k=\Q$. Let $q=2$ or $q\equiv 5 \mod 8$ be a prime. Suppose $m$ is an integer and
$$m\equiv \begin{cases}\pm 1\mod 8\text { or }\pm 2 \mod 16, &\text{ if }q=2,\cr \pm 1,\pm 5 \mod 8, &\text{ if }q\equiv 5 \mod 8,\end{cases}$$
and none of $\pm m,\pm mq$ is a square in $\Q^*$ ($eg.$ $(q,m)=(2,7) \text{ or } (5,17)$). Let $L_1 = \Q(\sqrt{-1}, \sqrt[4]{q})$ and $L_2 = \Q(\sqrt{-1}, \sqrt{m \sqrt{q}})$. Then $F =L_1\cap L_2= \Q(\sqrt{-1}, \sqrt{q})$, and $\Sh^2(\Q, \widehat{T}) = \Z / 2 \Z$ while $\Sh^2(\Q, \widehat{S}) = 0$.

In particular, any subextension of $F/\Q$ satisfies the Hasse norm principle, but $(L_1, L_2; \Q)$ does not satisfy the multinorm principle, i.e. there exists $a \in \Q^*$ such that the equation $N_{L_1/\Q}(z_1). N_{L_2/\Q}(z_2) = a$ violates the Hasse principle, while the equation $N_{F/\Q}(w) = a$ has a rational solution.
\end{proposition}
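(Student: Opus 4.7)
The plan is to combine Theorem \ref{main thm} with an explicit local analysis at $v = 2$ to show that $\Sh^2(\Q, \widehat{S}) = 0$ while $\Sh^2(\Q, \widehat{T}) \cong \Z/2\Z$. First, one verifies $F = L_1 \cap L_2 = \Q(\sqrt{-1}, \sqrt{q})$: the non-square hypotheses on $\pm m, \pm m q$ imply that $x^4 - m^2 q$ is irreducible over $\Q$, so $L_2$ is a $D_4$-extension of $\Q$ of degree $8$, and similarly $L_1$ is a $D_4$-extension of degree $8$. Both contain $\Q(\sqrt{-1}, \sqrt{q})$, and they must differ (otherwise $\sqrt{m} = \sqrt{m \sqrt{q}}/\sqrt[4]{q}$ would lie in $L_1$, contradicting the hypothesis on $m$), forcing $F = L_1 \cap L_2 = \Q(\sqrt{-1}, \sqrt{q})$.

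Since $L_1, L_2$ are Galois over $\Q$, Theorem \ref{main thm} applies with the choice $F_i := L_i$ and yields an isomorphism $\Sh^2_\omega(\Q, \widehat{T}) \cong \Sh^2_\omega(\Q, \widehat{S})$. Because $F/\Q$ is biquadratic with $\Gal(F/\Q) = V_4$, the short exact sequence $0 \to \Z \to \Z[V_4] \to \widehat{S} \to 0$ gives $H^2(V_4, \widehat{S}) \cong H^3(V_4, \Z) \cong \Z/2\Z$, whose generator restricts to zero on every cyclic subgroup of $V_4$ and therefore lies in $\Sh^2_\omega$; thus $\Sh^2_\omega(\Q, \widehat{S}) \cong \Z/2\Z$. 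To obtain $\Sh^2(\Q, \widehat{S}) = 0$ I exhibit a place whose decomposition group in $V_4$ is the full group: the congruences on $q$ are tailored so that $v = 2$ does the job. For $q = 2$, the completion $\Q_2(\zeta_8)/\Q_2$ is biquadratic; for $q \equiv 5 \pmod 8$, one checks that $-1, q, -q$ are all non-squares in $\Q_2^*$, so $\Q_2(\sqrt{-1}, \sqrt{q})/\Q_2$ is biquadratic. In both cases the generator of $H^3(V_4, \Z)$ restricts non-trivially to the decomposition subgroup at $v = 2$, giving $\Sh^2(\Q, \widehat{S}) = 0$.

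The main obstacle is to show $\Sh^2(\Q, \widehat{T}) \cong \Z/2\Z$, i.e.\ that the non-trivial class of $\Sh^2_\omega(\Q, \widehat{T})$ is locally trivial everywhere. From the exact sequence $0 \to \widehat{S} \to \widehat{T} \to \widehat{R} \to 0$ appearing in the proof of Theorem \ref{main thm}, combined with the vanishing $H^1(\Q, \widehat{R}) = 0$ established there, the map $H^2(\Q, \widehat{S}) \to H^2(\Q, \widehat{T})$ is injective and sends the generator $\alpha$ of $\Sh^2_\omega(\Q, \widehat{S})$ to a generator $\beta$ of $\Sh^2_\omega(\Q, \widehat{T})$. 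For $\beta$ to lie in $\Sh^2(\Q, \widehat{T})$ one needs, at every place $v$, that the image of $\alpha_v$ in $H^2(\Q_v, \widehat{T})$ vanishes, which by the local analogue of the long exact sequence is equivalent to $\alpha_v$ lying in the image of $H^1(\Q_v, \widehat{R})$. This condition is automatic where $\alpha_v = 0$, so the verification reduces to $v = 2$ (the congruences on $q$ being chosen so as to exclude any other place with full decomposition group in $V_4$). The technical core is the local computation at $v = 2$: using the congruences on $m$ to describe the étale algebras $L_i \otimes_\Q \Q_2$ explicitly, applying Shapiro's lemma to rewrite $H^1(\Q_2, \widehat{R})$ as a kernel of local norm maps among these completions, and verifying by a diagram chase (analogous to the one in the preceding example) that $\alpha_2$ lies in this image. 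Once this is done, global duality produces the desired $a \in \Q^*$ for which the multinorm equation fails the Hasse principle, while the HNP for $F/\Q$ (and for all its cyclic subextensions, via Hasse's classical theorem) ensures that $N_{F/\Q}(w) = a$ is solvable in $\Q$.
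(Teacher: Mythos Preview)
Your overall architecture matches the paper: apply Theorem~\ref{main thm} to get $\Sh^2_\omega(\Q,\widehat{T})\cong\Sh^2_\omega(\Q,\widehat{S})\cong\Z/2\Z$, use that the decomposition group at $2$ in $\Gal(F/\Q)\cong V_4$ is full to get $\Sh^2(\Q,\widehat{S})=0$, and then reduce the question of whether the nontrivial class survives in $\Sh^2(\Q,\widehat{T})$ to a single local computation at $v=2$. All of that is correct and is exactly what the paper does.

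The gap is at what you call the ``technical core''. You say you will ``describe the \'etale algebras $L_i\otimes_\Q\Q_2$ explicitly'' and then invoke Shapiro's lemma and a diagram chase. But you have not identified the one fact that makes the computation collapse, and without it your sketch is not a proof. The point of the congruence conditions on $m$ is precisely that they force $m$ to be a \emph{square} in $F_2=\Q_2(\sqrt{-1},\sqrt{q})$: for $q=2$ one has $F_2=\Q_2(\zeta_8)$ and each allowed $m$ is, up to a square in $\Q_2^*$, one of $1,-1,2,-2$; for $q\equiv 5\pmod 8$ every odd $m$ is, up to a square in $\Q_2^*$, one of $1,-1,q,-q$. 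Consequently $L_1\otimes_\Q\Q_2=L_2\otimes_\Q\Q_2=:L_v$, a single $D_4$-extension of $\Q_2$. This collapses $T_{\Q_2}$ to $\R_{L_v/\Q_2}\Gm\times\R^1_{L_v/\Q_2}\Gm$, and showing that the image of $\alpha_2$ in $H^2(\Q_2,\widehat{T})$ vanishes becomes the purely group-theoretic statement that the inflation map
\[
H^3(\Gal(F_2/\Q_2),\Z)\;\longrightarrow\;H^3(\Gal(L_v/\Q_2),\Z),
\qquad\text{i.e.}\quad H^3(V_4,\Z)\to H^3(D_4,\Z),
\]
is zero. That in turn follows from the five-term exact sequence for $1\to H\to D_4\to V_4\to 1$ with $H\cong\Z/2\Z$, using that $H$ is simultaneously the centre and the derived subgroup of $D_4$, so that the transgression $H^1(H,\Q/\Z)^{V_4}\to H^2(V_4,\Q/\Z)$ is an isomorphism of groups of order $2$ and the inflation $H^2(V_4,\Q/\Z)\to H^2(D_4,\Q/\Z)$ vanishes. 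Your alternative route through the image of $H^1(\Q_2,\widehat{R})$ is not wrong in spirit, but once you know $L_{1,v}=L_{2,v}$ it amounts to the same inflation computation; and without that observation the ``diagram chase analogous to the preceding example'' has no content, since that earlier example concerns a globally different Galois configuration and does not transfer directly.
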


\begin{proof}Both $L_1$ and $L_2$ are Galois over $\Q$. By assumption, $\pm m,\pm mq$ are not squares in $\Q^*$, hence $L_1\neq L_2$.

First, Sansuc proved that $\Sh^2_{\omega}(\Q, \widehat{S}) = \Z / 2 \Z$ (see \cite{S}, (2.16)). Hence by Theorem \ref{main thm}, we get $\Sh^2_{\omega}(\Q, \widehat{T}) = \Z / 2 \Z$. Similarly, we know that in this case $\Sh^2(\Q, \widehat{S}) = 0$ since $\Gal(F/\Q)=\Gal(F_2/\Q_2)$, where $F_2=F\otimes_\Q \Q_2$ is a field.

By Theorem \ref{main thm}, we have the following commutative diagram
\begin{displaymath}
\xymatrix{
H^2(F/\Q, \widehat{S}) = \Sh^2_{\omega}(\Q, \widehat{S}) \ar[r]^(.6){\cong} \ar[d]^f & \Sh^2_{\omega}(\Q, \widehat{T}) \ar[d] \\
\prod_p H^2(F_p/\Q_p, \widehat{S}) \ar[r]^{g=(g_p)} & \prod_p H^2(\Q_p, \widehat{T}) \, ,
}
\end{displaymath}
where $F_p := F \otimes_{\Q} \Q_p$. This implies that $\Sh^2(k, \widehat{T}) \cong \textup{Ker}(g \circ f)$.

If $p \neq 2$, then $F_p$ is a product of cyclic field extensions of $\Q_p$, therefore $H^3(F_p/\Q_p, \Z) = 0$. We know that $H^2(F_p/\Q_p, \widehat{S}) \cong H^3(F_p/\Q_p, \Z)$, therefore for all odd $p$, $H^2(F_p/\Q_p, \widehat{S})=0$.

Hence $\Sh^2(k, \widehat{T}) \cong \textup{Ker}(g_2 \circ f)$. Therefore, we only need to prove that $g_2 = 0$.

Since $m$ is a square in $\Q_2(\sqrt{-1},\sqrt{q})$, then $L_1 \otimes_{\Q} \Q_2 = L_2 \otimes_{\Q} \Q_2$. Denote this degree $8$ field extension of $\Q_2$ by $L_v$. Note that $L_v / \Q_2$ is a degree $8$ Galois extension, with Galois group isomorphic to the dihedral group $\D_4$.

We deduce that $T_2 := T \times_{\Q} \Q_2$ is isomorphic as a $\Q_2$-torus to the torus defined by the equation $N_{L_v/\Q_2}(w_1). N_{L_v/\Q_2}(w_2) = 1$, so $T_2 \cong \R_{L_v/\Q_2} \Gm \times \R_{L_v/\Q_2}^1 \Gm$. Let $T' := \R_{L_v/\Q_2}^1 \Gm$. Then the natural map $T_2 \to S_2$ factors through $T'$, hence we only need to prove that the map $h : H^2(F_2/\Q_2, \widehat{S_2}) \to H^2(L_v/\Q_2, \widehat{T'})$ is zero.

Define $G := \Gal(L_v/\Q_2) \cong \D_4$, $H := \Gal(L_v/F_2) \cong \Z / 2 \Z$, so that $G/H \cong \Gal(F_2/\Q_2) \cong \Z / 2 \Z \times \Z / 2 \Z$. We have an commutative exact diagram of $G$-modules:
\begin{displaymath}
\xymatrix{
0 \ar[r] & \Z \ar[r] \ar[d]^= & \Z[G/H] \ar[r] \ar[d] & \widehat{S_2} \ar[r] \ar[d] & 0 \\
0 \ar[r] & \Z \ar[r] & \Z[G] \ar[r] & \widehat{T'} \ar[r] & 0
}
\end{displaymath}
that induces the following commutative diagram
\begin{displaymath}
\xymatrix{
H^2(G/H, \widehat{S_2}) \ar[r]^{\cong} \ar[d]^{h} & H^3(G/H, \Z) \ar[d]^{\textup{inf}} \\
H^2(G, \widehat{T'}) \ar[r]^{\cong} & H^3(G, \Z) \, .
}
\end{displaymath}
Therefore we only need to prove that the inflation map $H^3(G/H, \Z) \to H^3(G, \Z)$, i.e. the inflation map $H^2(G/H, \Q / \Z) \to H^2(G, \Q / \Z)$, is zero. Consider the restriction-inflation exact sequence
\begin{equation} \label{res-inf}
H^1(G, \Q / \Z) \xrightarrow{\textup{res}} H^1(H, \Q / \Z)^{G/H} \xrightarrow{\delta} H^2(G/H, \Q / \Z) \xrightarrow{\textup{inf}} H^2(G, \Q / \Z) \, .\end{equation}
Since $H$ is exactly the derived subgroup of $G \cong \D_4$, the restriction map $H^1(G, \Q / \Z) \xrightarrow{\textup{res}} H^1(H, \Q / \Z)^{G/H}$ is the zero map. So the map $\delta$ is injective. Moreover, $H \cong\Z / 2 \Z$ and $H$ is central in $G$, therefore $H^1(H, \Q / \Z)^{G/H} \cong \Z / 2 \Z$.
A classical computation of Schur (see for instance \cite{Ka}, Corollary 2.2.12) implies that
$$H^2(G/H, \Q/\Z) = H^2(\Z / 2 \Z \times \Z / 2 \Z, \Q / \Z) \cong \Z / 2 \Z \, ,$$
hence the map $\delta$ is surjective.

Eventually, the exact sequence \eqref{res-inf} implies that the map $\textup{inf}$ is zero, which concludes the proof.
\end{proof}

We keep the same notations as in Proposition \ref{prop counterex}.

We provide here an explicit example for Proposition \ref{prop counterex}, i.e. an explicit rational number $a$ for which the equation $N_{L_1/\Q}(z_1). N_{L_2/\Q}(z_2) = a$ violates the Hasse principle, while the equation $N_{F/\Q}(w) = a$ has a rational solution.

\begin{example}
We consider the special case where $q=2$ and $m=7$, that is $L_1 := \Q(\sqrt{-1}, \sqrt[4]{2})$ and $L_2 := \Q(\sqrt{-1}, \sqrt{7\sqrt{2}})$ and $F = L_1 \cap L_2 = \Q(\sqrt{-1}, \sqrt{2})$. Then the equation $N_{L_1/\Q}(z_1). N_{L_2/\Q}(z_2) = 97$ is a counterexample to the Hasse principle (while $N_{F/\Q}(3+\sqrt{2} + \sqrt{-2}) = 97$).

Let us prove this fact.

We first prove that the equation $N_{L_1/\Q}(z_1). N_{L_2/\Q}(z_2) = 97$ is locally solvable. First, it is clearly solvable over $\Q_p$, for all $p \neq 2, 97$, and over $\mathbb{R}$. Since $97 \equiv 1 \, \, [32]$, the equation $x^8 = 97$ is solvable over $\Q_2$, hence the equation $N_{L_1/\Q}(z_1). N_{L_2/\Q}(z_2) = 97$ has a solution over $\Q_2$. We check that $\left( \frac{-1}{97} \right) = \left( \frac{2}{97} \right) = 1$, and that $\left( \frac{2}{97} \right)_4 = -1$ and $\left( \frac{7}{97} \right) = -1$, therefore the extension $L_2/\Q$ is totally split at the prime $97$, hence the equation $N_{L_1/\Q}(z_1). N_{L_2/\Q}(z_2) = 97$ is solvable over $\Q_{97}$. Eventually, the multinorm equation under consideration is locally solvable.

We now prove that this equation does not have any global solution. Following \cite{W2} Corollary 1, we consider the following element $A := \textup{cor}_{F/\Q}(N_{L_2/F}(z_2),\chi)$ in the Brauer group $\Br(X)$, where $\chi$ is the unique non-trivial character of $\Gamma_F$ that factors through $\Gal(L_1/F)$. We now prove that this element $A$ induces a Brauer-Manin obstruction to the Hasse principle on $X$.

Let $p$ be a prime number (or $p=\infty$) and let $x_p = (z_{1,p}, z_{2,p}) \in X(\Q_p)$ with $z_{i,p} \in (L_i \otimes \Q_p)^*$.
\begin{itemize}
	\item if $p=2$, then $A(x_p) = 0$ since $(N_{L_2/F}(z_{2,2}),\chi) = 0$, because $L_1 \otimes \Q_p = L_2 \otimes \Q_p$ and $(N_{L_1/F}(z),\chi) = 0$.
	\item if $p=\infty$, then $\chi_p$ is trivial since $F$ is totally imaginary, hence $A(x_p)=0$.
\end{itemize}
In the following, we will show:
\begin{itemize}
    \item if $p \neq 2, 97, \infty$, then $A(x_p)=0$.
	\item if $p = 97$, then $A(x_p) \neq 0$.
\end{itemize}

Let $p\neq 2,\infty$ and let us prove that the restriction of $A$ to $\Br(X\times_\Q \Q_p)$ is a constant. We fix an embedding $\bar \Q \hookrightarrow \bar \Q_p$ and let $K=\Q_p\cap F$, more precisely, we will show that the restriction $\res_{\Q/K}(A)$ of $A$ to $\Br(X\times_\Q K)$ is the constant $(97, \chi')$, where $\chi'$ is a character of $\Gamma_K$ which lifts $\chi$.

Using the notation in Proposition 1.5.6 (\cite{NSW}, chaper 1), let $G=\Gamma_\Q,U=\Gamma_K,V=\Gamma_F\subset U$, we have the following decomposition $$G=\dot{\bigcup}_\sigma U\sigma=\dot{\bigcup}_\sigma U\sigma V \, ,$$
where $\sigma$ runs through a finite system of representatives of the double cosets.

  Then we have
$$\begin{aligned}
\res_{\Q/K}(A)&=\res_{G/U}\text{cor}_{V/G}(N_{L_2/F}(z_2),\chi)
=\sum_\sigma \text{cor}_{U/V}(N_{L_2/F}(z_2)^{\sigma},\chi^\sigma)\\
&=\sum_\sigma \text{cor}_{U/V}(N_{L_2/F}(z_2)^{\sigma},\chi) \, ,
\end{aligned}$$
where the last equation holds since $\chi=\chi^\sigma$ (note that $\chi$ has order $2$).
The field extension $F/K$ is cyclic since $K=F\cap \Q_p$ and $F/\Q$ is unramified at $p$. Note that $\Gal(F/\Q)=\Z/2\Z\times \Z/2\Z$ is non-cyclic, hence $2\mid [K:\Q]$, hence $\Gal(L_1/K)$ is abelian since $\Gal(L_1/\Q)=\D_4$. Therefore we can lift $\chi$ to be a character $\chi'$ of $\Gamma_K$ factoring through $\Gal(L_1/K)$ and we have
$$\begin{aligned}
\res_{\Q/K}(A)&=\sum_\sigma \text{cor}_{V/U}(N_{L_2/F}(z_2)^{\sigma},\res_{U/V}(\chi'))=\sum_\sigma (N_{F/K}(N_{L_2/F}(z_2)^{\sigma}),\chi')\\
&=(N_{F/\Q}(N_{L_2/F}(z_2)),\chi')=(N_{L_2/\Q}(z_2),\chi')\\
&=(97,\chi')-(N_{L_1/\Q}(z_1),\chi')=(97,\chi').
\end{aligned}
$$
Hence we have $A(x_p)=0$ for $p \neq 2, 97, \infty$, and $A(x_p) \neq 0$ for $p=97$,
since $\left( \frac{2}{97} \right)_4 = -1$.

Eventually, we deduce that $A$ is in the unramified Brauer group of $X$ and that $\sum_p A(x_p) = A(x_{97}) \neq 0$ for all $(x_p) \in \prod_p X(\Q_p)$. Therefore, the reciprocity law from class field theory implies that $X(\Q) = \emptyset$, which concludes the proof.
\end{example}

\bf{Acknowledgment} \it{The first author acknowledges the support of the French Agence Nationale de la Recherche (ANR) under reference ANR-12-BL01-0005. He also thanks Ulrich Derenthal and the "Center for Advanced Studies" for the invitation to Ludwig-Maximilians-Universit\"at in Munich where this work was initiated. The second author is supported by NSFC grant \# 10671104, 973 Program 2013CB834202 and grant DE 1646/2-1 of the Deutsche Forschungsgemeinschaft. Both authors thank Jean-Louis Colliot-Th\'el\`ene and the referee for useful remarks and suggestions.}



\bigskip
{\small

{\scshape
C. Demarche: Institut de Math\'ematiques de Jussieu (IMJ), Universit\'e Pierre et Marie Curie, 4 place Jussieu,
75252 Paris Cedex 05, France}
\smallskip

{\it E-mail: }
\url{demarche@math.jussieu.fr}

\bigskip

{\scshape
D. Wei: Academy of Mathematics and System Science,  CAS, Beijing
100190, P.R.China and Mathematisches Institut der Universit\"at M\"unchen Theresienstr. 39, D-80333 M\"unchen}
\smallskip

{\it E-mail: }
\url{dshwei@amss.ac.cn}
}

\end{document}